\newtheorem{theorem}{Theorem}[section]
\newtheorem{corollary}[theorem]{Corollary}
\newtheorem{lemma}[theorem]{Lemma}
\newtheorem{prop}[theorem]{Proposition}
\theoremstyle{definition}
\newtheorem{definition}[theorem]{Definition}
\newtheorem{question}[theorem]{Question}
\newcommand{\tail}{\text{tail}}
\newcommand{\x}{\mathfrak{X}}
\newcommand{\y}{\mathfrak{Y}}
\newcommand{\n}{\mathbb {N}}
\newcommand{\p}{\mathbb{P}}
\newcommand{\q}{\mathbb{Q}}
\newcommand{\fin}{\mathrm{Fin}}
\newcommand{\power}{\mathcal{P}}
\newcommand{\la}{\langle}
\newcommand{\ra}{\rangle}
\begin{document}
\author{Victoria Gitman}
\today
\address{New York City College of Technology (CUNY),
Mathematics, 300 Jay Street, Brooklyn, NY 11201 USA}
\email{vgitman@nylogic.org}
\title{Proper and Piecewise Proper Families of Reals}
\maketitle
\begin{abstract}
I introduced the notions of proper and piecewise proper families of
reals to make progress on a long standing open question in the field
of models of Peano Arithmetic about whether every Scott set is the
standard system of a model of {\rm PA}. A Scott set is a family of
reals closed under $\Delta_1$ definability and satisfying weak
Konig's Lemma. A family of reals $\x$ is proper if it is
arithmetically closed and the quotient Boolean algebra $\x/\fin$ is
a proper partial order. A family is piecewise proper if it is the
union of a chain of proper families of size $\leq\omega_1$. I showed
that under the Proper Forcing Axiom, every proper or piecewise
proper family of reals is the standard system of a model of PA.
Here, I explore the question of the existence of proper and
piecewise proper families of reals of different cardinalities.
\end{abstract}
\section{Introduction}
One of the central concepts in the field of models of Peano
Arithmetic is the \emph{standard system} of a model of {\rm PA}. The
\emph{standard system} of a model of {\rm PA} is the collection of
subsets of the natural numbers that arise as intersections of the
definable sets of the model with its \emph{standard part} $\n$. The
notion of a \emph{Scott set} captures three key properties of
standard systems.
\begin{definition}
$\x\subseteq \power(\n)$ is a \emph{Scott set} if
\begin{itemize}
\item [(1)] $\x$ is a Boolean algebra of sets.
\item [(2)] If $A\in\x$ and $B$ is Turing computable from
$A$, then $B\in\x$.
\item [(3)] If $T$ is an infinite binary tree coded by a set in
$\x$, then $\x$ has a set coding some path through $T$.
\end{itemize}
\end{definition}
In 1962, Scott showed that every standard system is a Scott set and
the partial converse that every \emph{countable} Scott set is the
standard system of a model of {\rm PA} \cite{scott:ssy}. The
question of whether \emph{every} Scott is the standard system of a
model of {\rm PA} became known in the folklore as Scott's Problem.
In 1982, Knight and Nadel extended Scott's result to Scott sets of
size $\omega_1$ \cite{knight:scott}. They showed that every Scott
set of size $\omega_1$ is the standard system of a model of {\rm
PA}. It has proved very difficult to make further progress on
Scott's Problem. My approach, following Engstr\"om
\cite{engstrom:thesis} and suggested several years earlier by
Hamkins, Marker, etc., has been to use the set theoretic techniques
of forcing and the forcing axioms.

We can associate with every family of reals $\x$, the poset
$\x/\fin$ which consists of the infinite sets of $\x$ under the
ordering of \emph{almost inclusion}. Engstr\"om in
\cite{engstrom:thesis} introduced the use of this poset in
connection with Scott's Problem. A family of reals is
\emph{arithmetically closed} if whenever $A$ is in it and $B$ is
arithmetically definable from $A$, then $B$ is also in it. The
arithmetic closure of $\x$ is an essential ingredient in the
constructions that make the posets $\x/\fin$ useful in investigating
properties of uncountable models of {\rm PA} (for details of the
constructions, see \cite{gitman:scott}). For this reason, whenever
we view a family of reals as a poset we will always assume
arithmetic closure. A family $\x$ is \emph{proper} if it is
arithmetically closed and the poset $\x/\fin$ is proper. A family
$\x$ is piecewise proper if it is the union of a chain of proper
families each of which has size $\leq\omega_1$. I showed in
\cite{gitman:scott} that under the Proper Forcing Axiom ({\rm PFA}),
every proper or piecewise proper family of reals is the standard
system of a model of {\rm PA}. I will give an extended discussion of
properness and the {\rm PFA} in Section \ref{sec:proper}.

Throughout the paper, I equate \emph{reals} with subsets of $\n$. It
is easy to see that every countable arithmetically closed family of
reals is proper and $\power(\n)$ is proper as well (see Section
\ref{sec:forcing}). Every arithmetically closed family of size
$\leq\omega_1$ is trivially piecewise proper since it is the union
of a chain of countable arithmetically closed families. It becomes
much more difficult to find instances of uncountable proper families
of reals other than $\power(\n)$. Also, it was not clear for a while
whether there are piecewise proper families of of size larger than
$\omega_1$. My main results are:

\begin{theorem}
If {\rm CH} holds, then $\power^V(\n)/\fin$ remains proper in any
generic extension by a c.c.c.\ poset.
\end{theorem}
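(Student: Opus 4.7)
The plan is to show the stronger statement that the poset $\x/\fin$, where $\x := \power^V(\n)$, remains $\sigma$-closed in $V[G]$; properness will then follow by the standard diagonalization. In $V$, $\x/\fin$ is already $\sigma$-closed, because any countable family of infinite subsets of $\n$ admits a pseudo-intersection; under CH it additionally has cardinality $\omega_1$. So the task reduces to preserving $\sigma$-closure through the c.c.c.\ extension.

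To do so, let $\la q_n : n < \omega\ra \in V[G]$ be $\subseteq^*$-descending in $\x/\fin$, with name $\la \dot q_n : n < \omega\ra \in V$. The key tool will be c.c.c.\ covering: because $\p$ has the c.c.c., for each $n$ the set $F(n) := \{A \in \x : (\exists p \in \p)\, p \Vdash \dot q_n = \check A\}$ is a countable subset of $\x$ in $V$, and the sequence $\la F(n) : n < \omega\ra$ belongs to $V$. Working inside $V$ and using $\sigma$-closure there, I recursively pick $H_n \in \x$ with $H_n \subseteq^* H_{n-1}$ and $H_n \subseteq^* A$ for every $A \in F(n)$, and then let $q^* \in V$ be a pseudo-intersection of the resulting $\subseteq^*$-descending sequence $\la H_n : n < \omega\ra$. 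Back in $V[G]$, since $q_n \in F(n)$ we have $H_n \subseteq^* q_n$, whence $q^* \subseteq^* q_n$ for every $n$, as required.

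With $\sigma$-closure in hand, properness in $V[G]$ is then routine. Given a countable $M \prec H_\theta^{V[G]}$ containing $\x/\fin$ and $q \in M \cap \x/\fin$, I will enumerate the countably many maximal antichains $\la D_n : n < \omega\ra$ of $\x/\fin$ that lie in $M$ and recursively build a descending sequence $q = q_0 \geq q_1 \geq \cdots$ in $M \cap \x/\fin$ with $q_{n+1}$ below some $d_n \in D_n \cap M$, using elementarity of $M$ to keep $d_n$ inside $M$. The previous step supplies a lower bound $q^* \in \x/\fin$, and since $q^* \leq d_n \in D_n \cap M$ for every $n$, $q^*$ is $(M, \x/\fin)$-generic below $q$.

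The main obstacle will be the second paragraph: it is not enough that $\x/\fin$ is $\sigma$-closed in $V$, since the descending sequences appearing in $V[G]$ need not lie in $V$, so their lower bounds have to be produced from ground-model data alone. The c.c.c.\ covering step is precisely what bridges this gap, and is the only place the hypothesis on $\p$ truly enters.
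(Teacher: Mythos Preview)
Your argument has a genuine gap: the claim that $\x/\fin=\power^V(\n)/\fin$ remains $\sigma$-closed in $V[G]$ is false whenever $\p$ adds a new real. Fix in $V$ a family $\la A_s : s\in 2^{<\omega}\ra$ of infinite sets with $A_\emptyset=\n$ and each $A_s$ split into disjoint infinite halves $A_{s\smallfrown 0}, A_{s\smallfrown 1}$. If $x\in 2^\omega\cap(V[G]\setminus V)$, then $\la A_{x\upharpoonright n}:n<\omega\ra$ is a $\subseteq$-descending sequence of sets from $\power^V(\n)$ with no pseudo-intersection in $V$: any infinite $B\in V$ almost contained in every $A_{x\upharpoonright n}$ would let you reconstruct $x$ in $V$ (since $x(n)$ is the unique $i$ with $B\cap A_{(x\upharpoonright n)\smallfrown i}$ infinite), contradicting $x\notin V$. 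The concrete step where your write-up breaks is ``pick $H_n\subseteq^* A$ for every $A\in F(n)$'': the countable set $F(n)$ of possible values of $\dot q_n$ need not be centered---incompatible conditions in $\p$ may force $\dot q_n$ to equal disjoint infinite sets---so no such $H_n$ need exist.

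The paper does \emph{not} try to preserve $\sigma$-closure. It works directly with countable $M[g]\prec H_\lambda^{V[g]}$ having $M\subseteq V$; using {\rm CH} and a fixed continuous $\omega_1$-chain $\la\x_\xi\ra$ it identifies $M[g]\cap\power^V(\n)$ with some $\x_\alpha$ (Lemma~\ref{le:intersect}), covers the countable family $\{\mathscr D\cap\x_\alpha:\mathscr D\in M\text{ dense}\}$ by a countable set in $V$ via c.c.c., and then uses countable closure \emph{in $V$} to produce a condition $A$ that is $M$-generic. The real work is upgrading $A$ from $M$-generic to $M[g]$-generic, which is done by a product-forcing argument: writing $M[g][G]=M[G][g]$ and invoking Lemma~\ref{le:count} (that $\p$ stays c.c.c.\ after countably closed forcing) to see that $g$ is $M[G]$-generic. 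That product step is precisely the replacement for the $\sigma$-closure you hoped for.
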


\begin{theorem}
There is a generic extension of $V$ by a c.c.c.\ poset, which
contains continuum many proper families of reals of size $\omega_1$.
\end{theorem}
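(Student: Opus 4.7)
The plan is to exhibit $2^{\aleph_0}$ many distinct proper families by leveraging the previous theorem in many intermediate submodels of a single large c.c.c.\ extension. Starting from a ground model $V$ satisfying GCH (which one may assume, since the statement only asks for the existence of some c.c.c.\ extension), I would force with $\p = \mathrm{Fn}(\omega_2, 2)$ to add $\aleph_2$ mutually generic Cohen reals $\{c_\alpha : \alpha < \omega_2\}$, yielding $V[G]\models 2^{\aleph_0}=\aleph_2$ by the usual nice-name count.

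For each $\beta < \omega_2$, define $S_\beta = \omega_1 \cup \{\omega_1+\beta\}$, a set of size $\aleph_1$, and set $W_\beta = V[G\upharpoonright S_\beta]$. Then $W_\beta$ is the Cohen extension of $V$ by $\mathrm{Fn}(S_\beta,2)$; since $|S_\beta|=\aleph_1$ and $V\models\mathrm{CH}$, a standard nice-name argument gives $W_\beta\models\mathrm{CH}$, and the family $\power^{W_\beta}(\n)$ has size $\aleph_1$ in $W_\beta$ and hence in $V[G]$ (c.c.c.\ preserves cardinals).

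The key step is then to apply the first theorem to each $W_\beta$ in turn. Since $V[G]$ is obtained from $W_\beta$ by forcing with the complementary Cohen product $\mathrm{Fn}(\omega_2\setminus S_\beta,2)$, which is c.c.c., and $W_\beta\models\mathrm{CH}$, the previous theorem applied inside $W_\beta$ yields that $\power^{W_\beta}(\n)/\fin$ remains proper in $V[G]$. Distinctness of the families $\{\power^{W_\beta}(\n) : \beta<\omega_2\}$ follows from mutual genericity of the Cohen factors: for $\beta\neq\beta'$, the ordinal $\omega_1+\beta$ lies in $S_\beta$ but not in $S_{\beta'}$, so the real $c_{\omega_1+\beta}$ lies in $W_\beta\setminus W_{\beta'}$, separating the two families. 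This produces $\aleph_2 = 2^{\aleph_0}$ many distinct proper families of size $\aleph_1$ in $V[G]$.

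I do not anticipate a serious obstacle beyond bookkeeping: the argument essentially amounts to a recipe for converting the single proper family delivered by the previous theorem into a continuum-sized collection via intermediate Cohen extensions. The only mildly delicate point is the cardinal arithmetic used to secure $2^{\aleph_0}=\aleph_2$ in $V[G]$, which motivates the GCH hypothesis on $V$; if one wished to weaken it, one could instead add $\kappa$-many Cohen reals for some $\kappa$ with $\kappa^{\aleph_0}=\kappa$ and relabel.
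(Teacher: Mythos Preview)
Your argument has a genuine gap in its opening reduction. You claim one may assume $V\models\mathrm{GCH}$ ``since the statement only asks for the existence of some c.c.c.\ extension,'' but the theorem is a ZFC statement about the ambient universe: over \emph{every} $V$ there must be a c.c.c.\ poset producing the desired extension. Unlike $\mathrm{MA}+\neg\mathrm{CH}$, the hypothesis $\mathrm{CH}$ (let alone $\mathrm{GCH}$) cannot be obtained by c.c.c.\ forcing---a c.c.c.\ poset neither collapses cardinals nor destroys reals, so if $2^{\aleph_0}>\aleph_1$ in $V$ it remains so in every c.c.c.\ extension. Moreover, your use of $\mathrm{CH}$ is not merely the cardinal-arithmetic bookkeeping you flag at the end: it is essential for each intermediate model $W_\beta=V[G\upharpoonright S_\beta]$ to satisfy $\mathrm{CH}$, since $\power^V(\n)\subseteq\power^{W_\beta}(\n)$ forces $|\power^{W_\beta}(\n)|\geq 2^{\aleph_0}$ computed in $V$. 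Without $\mathrm{CH}$ in $W_\beta$ you cannot invoke the previous theorem, and the families $\power^{W_\beta}(\n)$ need not be proper (nor of size $\aleph_1$). Your suggested fix of varying $\kappa$ does not address this.

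That said, if one adds the hypothesis $V\models\mathrm{CH}$, your construction is correct and pleasantly economical: it derives the result directly from the first theorem via intermediate Cohen submodels, with distinctness coming from mutual genericity. The paper's proof takes a different route that works over arbitrary $V$: it first forces $\mathrm{MA}+\neg\mathrm{CH}$ (which \emph{is} c.c.c.-forceable), then forms a finite-support product $\prod_{\xi<2^\omega}\p^\xi$ where each $\p^\xi$ is the $\omega_1$-length iteration from the preceding theorem that explicitly manufactures a proper family. Martin's Axiom is what guarantees this product is c.c.c., and properness of each resulting family in the full extension is re-argued by analyzing how dense sets appear at bounded stages of the relevant factor. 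So the paper pays the price of a more hands-on properness verification in exchange for an argument that does not presuppose $\mathrm{CH}$ in the ground model.
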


\begin{theorem}
There is a generic extension of $V$ by a c.c.c.\ poset, which
contains continuum many piecewise proper families of reals of size
$\omega_2$.
\end{theorem}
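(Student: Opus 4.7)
The plan is to perform a finite-support c.c.c.\ iteration of length $\omega_2$ over a model of $\mathrm{CH}$ and then extract the desired piecewise proper families from the stratification provided by the iteration. Assume $V \models \mathrm{CH}$ and let $\p = \la \p_\alpha, \dot\q_\alpha : \alpha < \omega_2\ra$ be a finite-support c.c.c.\ iteration where each $\dot\q_\alpha$ has size $\leq \omega_1$ and where the iteration realizes, via bookkeeping, the construction of Theorem 2 inside each intermediate model of cofinality $\omega_1$. A standard counting gives $|\p_\alpha| \leq \omega_1$ for every $\alpha < \omega_2$, so $\mathrm{CH}$ holds in each intermediate model $V[G_\alpha]$, while $2^{\aleph_0} = \omega_2$ in $V[G] = V[G_{\omega_2}]$.

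To produce a single piecewise proper family of size $\omega_2$ in $V[G]$, set $\x_\alpha = \power^{V[G_\alpha]}(\n)$. Each $\x_\alpha$ has size $\omega_1$ (by $\mathrm{CH}$ in $V[G_\alpha]$) and, being the full power set there, is proper in $V[G_\alpha]$. Applying Theorem 1 inside $V[G_\alpha]$ to the tail forcing $\p_{[\alpha, \omega_2)}$ shows that $\x_\alpha/\fin$ remains proper in $V[G]$. Since every real in $V[G]$ has a nice name supported in some $\p_\alpha$ with $\alpha < \omega_2$, the chain $\la \x_\alpha : \alpha < \omega_2\ra$ has union $\power^{V[G]}(\n)$, exhibiting the latter as a piecewise proper family of size $\omega_2$.

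To obtain continuum many such families, I would vary the pieces along the chain by invoking Theorem 2 relativized to intermediate models. At each stage $\alpha < \omega_2$ of cofinality $\omega_1$, the construction of Theorem 2, carried out inside $V[G_\alpha]$, provides $\omega_1$ many distinct proper families $\{\x_\alpha^\xi : \xi < \omega_1\}$ of size $\omega_1$, each remaining proper in $V[G]$ by a further application of Theorem 1. Arranging, through the bookkeeping, the coherence $\x_\alpha^{f(\alpha)} \subseteq \x_\beta^{f(\beta)}$ for $\alpha < \beta$, one obtains, for each branch $f$ of a suitable $\omega_2$-tree in $V[G]$, a piecewise proper family $\y_f = \bigcup_\alpha \x_\alpha^{f(\alpha)}$ of size $\omega_2$. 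Choosing a tree with $2^{\aleph_0}$ many branches and ensuring the distinctness of the resulting unions produces continuum many such $\y_f$.

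The main obstacle is the simultaneous coherence and distinctness of the chains: the pieces $\x_\alpha^{f(\alpha)}$ along a branch must genuinely form an increasing $\subseteq$-chain, yet different branches must yield different unions. Handling this involves coding the branching information into specific generic reals added at each stage and exploiting that a real added at some stage $\beta > \alpha$ cannot already lie in any $\x_\alpha^\xi$, so that two branches diverging below $\alpha$ produce families differing at level $\alpha$. A secondary technical point is that Theorem 2's construction must be applicable uniformly inside each intermediate $V[G_\alpha]$, which should follow from the absoluteness of the underlying arguments together with the c.c.c.\ of the tail forcing.
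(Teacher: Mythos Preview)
Your construction of a \emph{single} piecewise proper family of size $\omega_2$ via the chain $\x_\alpha=\power^{V[G_\alpha]}(\n)$ is correct, and the paper in fact remarks that this route exhibits $\power(\n)$ itself as piecewise proper in the extension. The passage to \emph{continuum many} such families, however, contains two genuine gaps.

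First, you invoke Theorem~1 to preserve the properness of the pieces $\x_\alpha^\xi$ under the tail forcing, but Theorem~1 is specifically about $\power^V(\n)$: its proof relies on the countable closure of $\power(\n)/\fin$ to produce generic conditions. The families $\x_\alpha^\xi$ coming out of the Theorem~2 construction are not countably closed (only $\power(\n)$ is), so Theorem~1 does not apply to them. What is actually needed is the paper's Lemma~3.14, which says that a family built with the explicit generic-element property of the Theorem~3.11 construction remains proper after \emph{absolutely} c.c.c.\ forcing; your sketch neither states nor proves such a preservation result. Second, and more seriously, the branching scheme is left open precisely where you flag it. Coherence demands that each $\x_\beta^\eta$ extend every $\x_\alpha^\xi$ sitting below it in the tree, while distinctness demands that incomparable nodes carry genuinely different families; but the Theorem~2 construction produces mutually generic families with no containment relations among them, so it gives you distinctness and destroys coherence. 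Your suggestion of coding branch information into generic reals addresses only distinctness of the unions, not the $\subseteq$-nesting along each branch, and no mechanism for the latter is offered.

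The paper sidesteps both issues by following the template of Theorem~3.12 rather than branching inside a single iteration: over a model of $\mathrm{MA}+\neg\mathrm{CH}$ one takes a finite-support product $\prod_{\xi<2^\omega}\p^\xi$, where each $\p^\xi$ is an independent copy of the length-$\omega_2$ iteration of Theorem~3.15 producing one piecewise proper family $\x^\xi=\bigcup_{\alpha<\omega_2}\x^\xi_\alpha$. Independence of the factors yields distinctness immediately (a real generic for one factor cannot lie in a family built from another), and preservation of properness of each $\x^\xi_\alpha$ in the full extension follows from Lemma~3.14 together with the absolute c.c.c.\ of the remaining product, using commutativity of the product exactly as in the proof of Theorem~3.12. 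No tree and no coherence bookkeeping are required.
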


\section{Proper Posets and the PFA}\label{sec:proper}
Proper posets were invented by Shelah, who sought a class of
$\omega_1$ preserving posets that would extend the c.c.c.\ and
countably closed classes of posets and be preserved under iterations
with countable support. The Proper Forcing Axiom ({\rm PFA}) was
introduced by Baumgartner who showed that it is consistent by
assuming the existence of a supercompact cardinal
\cite{baumgartner:pfa}. This remains the best known upper bound on
the consistency of {\rm PFA}.

Recall that for a cardinal $\lambda$, the set $H_\lambda$ is the
collection of all sets whose transitive closure has size less than
$\lambda$. Let $\p$ be a poset and $\lambda$ be a cardinal greater
than $2^{|\p|}$. Since we can always take an isomorphic copy of $\p$
on the cardinal $|\p|$, we can assume without loss of generality
that $\p$ and $\power(\p)$ are elements of $H_\lambda$. In
particular, we want to ensure that all dense subsets of $\p$ are in
$H_\lambda$. Let $M$ be a countable elementary submodel of
$H_\lambda$ containing $\p$ as an element. If $G$ is a filter on
$\p$, we say that $G$ is $M$-\emph{generic} if for every maximal
antichain $A\in M$ of $\p$, the intersection $G\cap A\cap
M\neq\emptyset$. It must be explicitly specified what $M$-generic
means in this context since the usual notion of generic filters
makes sense only for transitive structures and $M$ is not
necessarily transitive. This definition of $M$-generic is closely
related to the definition for transitive structures. To see this,
let $M^*$ be the Mostowski collapse of $M$ and $\p^*$ be the image
of $\p$ under the collapse. Let $G^*\subseteq \p^*$ be the pointwise
image of $G\cap M$ under the collapse. Then $G$ is $M$-generic if
and only if $G^*$ is $M^*$-generic for $\p^*$ in the usual sense.

Later we will need the following important characterization of
$M$-generic filters.

\begin{theorem} If\/ $\p$ is a poset in $M\prec H_\lambda$, then a
$V$-generic filter $G\subseteq\p$ is $M$-generic if and only if
$M\cap \text{\emph{Ord}}=M[G]\cap\text{\emph{Ord}}$. \emph{(See, for
example, \cite{shelah:proper}, p. 105)}\label{th:equproper}
\end{theorem}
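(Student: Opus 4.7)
The plan is to prove both directions of the biconditional by translating between $\p$-names in $M$ and ordinals in $M[G]$.

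For the forward implication, suppose $G$ is $M$-generic. The containment $M\cap \text{Ord}\subseteq M[G]\cap \text{Ord}$ is trivial since $M\subseteq M[G]$, so I focus on the reverse. Take $\alpha\in M[G]\cap\text{Ord}$ with a $\p$-name $\tau\in M$ satisfying $\tau^G=\alpha$. Let $D$ be the set of conditions that either force $\tau=\check\beta$ for some specific ordinal $\beta$ or force $\tau\notin\text{Ord}$; a standard density argument shows $D$ is open dense, and $D\in M$ since it is definable from $\tau$. By elementarity pick a maximal antichain $A\subseteq D$ with $A\in M$. By $M$-genericity choose $p\in G\cap A\cap M$. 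Since $\tau^G\in\text{Ord}$, the condition $p$ cannot force $\tau\notin\text{Ord}$, so $p\Vdash \tau=\check\beta$ for some ordinal $\beta$; the witness $\beta$ lies in $M$ by elementarity, and thus $\alpha=\beta\in M$.

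For the reverse implication, suppose $M\cap\text{Ord}=M[G]\cap\text{Ord}$, and let $A\in M$ be a maximal antichain in $\p$. Since $\lambda>2^{|\p|}$, we have $A\in H_\lambda$, and by elementarity $M$ contains a bijection $f\colon|A|\to A$. As $G$ is $V$-generic and $A$ a maximal antichain, $G\cap A$ consists of a single element $p$; set $\gamma=f^{-1}(p)$. Consider the $\p$-name
\[
\dot\gamma \;=\; \bigcup_{q\in A}\{\langle\check\beta,q\rangle : \beta<f^{-1}(q)\},
\]
which is definable from $f$ and $A$, hence lies in $M$, and whose interpretation is exactly the ordinal $\gamma$. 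Thus $\gamma\in M[G]\cap\text{Ord}=M\cap\text{Ord}$, so $\gamma\in M$, and then $p=f(\gamma)\in M$, giving $p\in G\cap A\cap M\neq\emptyset$.

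The technical heart is the backward direction: from ordinal data alone one must recover a concrete element of an arbitrary antichain $A\in M$. The standard device is to label $A$ by ordinals via a bijection $f\in M$, converting the question ``which member of $A$ lies in $G$?'' into an ordinal question that can be answered inside $M[G]$ and then pulled back into $M$ by the hypothesis. I expect the main care point when writing this up cleanly is designing $\dot\gamma$ so that its interpretation \emph{is} the ordinal $\gamma$ itself (rather than, say, the singleton $\{\gamma\}$); the definition above handles this by packaging the initial segment $\{\beta:\beta<f^{-1}(q)\}$ into the name for each $q\in A$.
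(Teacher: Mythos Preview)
Your proof is correct and follows the standard argument for this classical result. However, the paper does not actually supply a proof of this theorem: it is stated with a citation to Shelah's \emph{Proper and Improper Forcing} (p.~105) and then used as a black box, so there is no ``paper's own proof'' to compare against. Your write-up would serve perfectly well as the omitted argument; the only minor remark is that in the forward direction you could skip the passage through a maximal antichain and use the dense set $D$ directly, since $M$-genericity for maximal antichains is equivalent to $M$-genericity for dense sets.
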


\begin{definition} Let $\p\in H_\lambda$ be a poset and $M$ be an
elementary submodel of $H_\lambda$ containing $\p$. Then a condition
$q\in\p$ is $M$-\emph{generic} if and only if every $V$-generic
filter $G\subseteq\p$ containing $q$ is $M$-generic.
\end{definition}
\begin{definition} A poset $\p$ is \emph{proper} if for every
$\lambda> 2^{|\p|}$ and every countable $M\prec H_\lambda$
containing $\p$, for every $p\in\p\cap M$, there is an $M$-generic
condition below $p$.
\end{definition}

When proving that a poset is proper it is often easier to use the
following equivalent characterization which appears in
\cite{shelah:proper} (p.\ 102).
\begin{theorem}\label{th:equivproper}
A poset $\p$ is proper if there exists a $\lambda>2^{|\p|}$ and a
club of countable $M\prec H_\lambda$ containing $\p$, such that for
every $p\in\p\cap M$, there is an $M$-generic condition below $p$.
\end{theorem}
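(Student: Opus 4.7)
The forward direction is immediate: if $\p$ is proper, then for any $\lambda>2^{|\p|}$ the whole of $[H_\lambda]^\omega$ serves as the required club. For the converse, suppose $\lambda_0>2^{|\p|}$ and $C_0\subseteq[H_{\lambda_0}]^\omega$ is a club of countable $M\prec H_{\lambda_0}$ containing $\p$ that admit $M$-generic conditions below every $p\in\p\cap M$.

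The guiding observation is that $M$-genericity is a local property: whether $q\in\p$ is $M$-generic depends only on $M\cap\p$ and on the maximal antichains of $\p$ lying in $M$, all of which reside in $H_{(2^{|\p|})^+}\subseteq H_\lambda\cap H_{\lambda_0}$. In particular, whenever $\lambda_0\in M\prec H_\lambda$, Tarski--Vaught yields $M\cap H_{\lambda_0}\prec H_{\lambda_0}$, and one checks directly that $M$-genericity and $(M\cap H_{\lambda_0})$-genericity coincide, since the maximal antichains of $\p$ in $M$ are exactly those in $M\cap H_{\lambda_0}$ and their $M$-traces equal their $(M\cap H_{\lambda_0})$-traces.

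My plan is to show that the set $D=\{M\in[H_\lambda]^\omega:M\prec H_\lambda,\ \p,\lambda_0,C_0\in M,\ M\cap H_{\lambda_0}\in C_0\}$ contains a club in $[H_\lambda]^\omega$. Closure under countable increasing unions follows from the closure of $C_0$ together with the commutativity of the shadow map $M\mapsto M\cap H_{\lambda_0}$ with unions; unboundedness comes from the standard alternation argument that interleaves Skolem closure in $H_\lambda$ with extensions of the current shadow into $C_0$, using the unboundedness of $C_0$ and closing off at an $\omega$-limit. For each $M\in D$, the hypothesis applied to $M\cap H_{\lambda_0}\in C_0$ produces an $(M\cap H_{\lambda_0})$-generic condition below any $p\in\p\cap M$, which by the locality observation is already $M$-generic.

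The main obstacle is the passage from a club of $M$ to every $M$ as required by the definition of properness. When $M^*\prec H_\lambda$ is an arbitrary countable submodel containing $\p$, one would like to extend $M^*$ to a larger countable $M\in D$ and transfer the $M$-generic condition to an $M^*$-generic one; the difficulty is that $M$-genericity does not generally imply $M^*$-genericity for $M^*\subseteq M$, since a maximal antichain $A\in M^*$ may have $A\cap M\supsetneq A\cap M^*$. Resolving this is the delicate step, and I would handle it by invoking Theorem~\ref{th:equproper} together with a Mostowski-collapse argument, which shows that the generic condition property is an isomorphism-invariant of the forcing shadow; this invariance lifts ``good on a club'' to ``good on every countable $M$'' and completes the proof.
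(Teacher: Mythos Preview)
The paper does not prove this theorem; it is quoted as a known characterization with a citation to Shelah's book (p.\ 102), so there is no proof in the paper to compare against.

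That said, your argument has a genuine gap precisely where you flag it. The reduction from an arbitrary $\lambda$ to the given $\lambda_0$ via the shadow map $M\mapsto M\cap H_{\lambda_0}$ is correct and standard, and your locality observation (that $M$-genericity depends only on $M\cap H_{(2^{|\p|})^+}$) is fine. But the final step --- passing from ``good on a club'' to ``good on every countable $M$'' --- is not actually carried out. Your proposed resolution, that goodness is ``an isomorphism-invariant of the forcing shadow'' and that this invariance lifts club to all, does not work as stated. Even if one grants that $M$-goodness depends only on the Mostowski collapse of $M$ (which itself needs care, since the generic condition $q$ lives in the real $\p$, not in the collapse, so two $M$'s with isomorphic collapses but different traces $\p\cap M$ need not have the same generic conditions), the set of countable $N\prec H_\lambda$ with the same collapse as a given $M^*$ is \emph{non-stationary}: all such $N$ satisfy $N\cap\omega_1=M^*\cap\omega_1$, while $\{N:N\cap\omega_1>M^*\cap\omega_1\}$ is a club disjoint from it. So nothing prevents every model isomorphic to $M^*$ from missing $C_0$ entirely, and the invariance gives no leverage. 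The appeal to Theorem~\ref{th:equproper} is likewise not cashed out: knowing $N[G]\cap\mathrm{Ord}=N\cap\mathrm{Ord}$ for some larger good $N\ni M^*$ does not yield $M^*[G]\cap\mathrm{Ord}=M^*\cap\mathrm{Ord}$, since an ordinal name in $M^*$ can be evaluated by $G$ to a value in $N\setminus M^*$.

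The standard route (as in Shelah, or Abraham's chapter in the Handbook) is more indirect: one passes through an equivalent formulation of properness that does not quantify over all $M$ --- for instance, preservation of stationary subsets of $[X]^\omega$, or a game-theoretic characterization --- and verifies that the club hypothesis already secures that formulation. A purely elementary-submodel argument of the kind you sketch requires substantially more than the isomorphism observation.
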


Countably closed posets and c.c.c.\ posets are proper and all proper
posets preserve $\omega_1$ \cite{shelah:proper}.

\begin{definition}
\emph{The Proper Forcing Axiom} ({\rm PFA}) is the assertion that
for every proper poset $\p$ and every collection $\mathcal D$ of at
most $\omega_1$ many dense subsets of $\p$, there is a filter on
$\p$ that meets all of them.
\end{definition}

The Proper Forcing Axiom decides the size of the continuum. It was
shown in \cite{veli:pfa} that under {\rm PFA}, continuum is
$\omega_2$.

\section{Proper and Piecewise Proper Families}\label{sec:forcing}

Let $\x$ be a family of reals. Define the poset $\x/\fin$ to consist
of the infinite sets in $\x$ under the ordering of almost inclusion.
That is, for infinite $A$ and $B$ in $\x$, we say that $A\leq B$ if
and only if $A\subseteq_\fin B$. Observe that $\x/\fin$ is forcing
equivalent to forcing with the Boolean algebra $\x$ modulo the ideal
of finite sets. A familiar and thoroughly studied instance of this
poset is $\power(\omega)/\fin$.  For a property of posets $\mathscr
P$, if $\x$ is an arithmetically closed family of reals and
$\x/\fin$ has $\mathscr P$, I will simply say that \emph{$\x$ has
property $\mathscr P$}. An important point to be noted here is that
whenever a family $\x$ is discussed as a poset, I will always be
assuming that \emph{it is arithmetically closed}. Recall that the
reason for this is the need for arithmetic closure of $\x$ in the
constructions with models of {\rm PA} in which $\x/\fin$ is used.

The easiest way to show that a poset is proper to show that it is
c.c.c.\ or countably closed. Thus, every countable arithmetically
closed family is proper since it is c.c.c.\ and $\power(\n)$ is
proper since it is countably closed. Unfortunately these two
conditions do not give us any other instances of proper families.

\begin{theorem}\label{th:wrong} Every c.c.c.\ family of reals is countable.
\end{theorem}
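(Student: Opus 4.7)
The plan is to show the contrapositive: if $\x$ is arithmetically closed and uncountable, then $\x/\fin$ has an uncountable antichain. The natural way to produce an antichain in a quotient-by-finite poset is to exhibit an almost disjoint family, so I would build such a family whose members are all coded in $\x$ and which is in bijection with $\x$ itself.

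First I would fix a computable bijection $\pi:\n\to 2^{<\omega}$ between the naturals and the finite binary strings. For each real $r\in \x$ (viewed as a characteristic function $r:\n\to 2$), I would define
\[
A_r \;=\; \{\,\pi^{-1}(r\!\restriction\! n) : n\in\omega\,\}\;\subseteq\;\n.
\]
Each $A_r$ is infinite, and $A_r$ is computable (in fact $\Delta_1$) from $r$, so by arithmetic closure $A_r\in\x$. Moreover, if $r\neq s$ then $r\!\restriction\! n\neq s\!\restriction\! n$ for all sufficiently large $n$, so $A_r\cap A_s$ is finite. Thus $r\mapsto A_r$ is an injection from $\x$ into the infinite sets of $\x$ whose image is pairwise almost disjoint.

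Now suppose $\x$ is uncountable and pick any $\omega_1$ distinct reals $\{r_\alpha:\alpha<\omega_1\}$ in $\x$. The corresponding sets $\{A_{r_\alpha}:\alpha<\omega_1\}$ lie in $\x$, are infinite, and any two of them are almost disjoint; hence they form an uncountable antichain in $\x/\fin$, contradicting c.c.c. This proves the theorem.

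There is no real obstacle here: the only subtlety is being explicit that the almost-disjoint coding $r\mapsto A_r$ is arithmetical (even recursive) so that arithmetic closure of $\x$ actually supplies the antichain members inside $\x$, rather than just in $\power(\n)$. Any standard recursive almost-disjoint coding would serve equally well in place of the branches-through-$2^{<\omega}$ construction above.
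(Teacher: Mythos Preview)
Your proof is correct and is essentially the same argument as the paper's: both encode each real by the set of codes of its initial segments (the paper writes this as $A' = \{\ulcorner A\cap n\urcorner : n\in\n\}$, you write it as $A_r = \{\pi^{-1}(r\!\restriction\! n) : n\in\omega\}$), observe this map lands in $\x$ by arithmetic closure, and note that distinct reals yield almost disjoint sets. The only cosmetic difference is that the paper concludes with an antichain of size $|\x|$ rather than just $\omega_1$, but for the stated theorem your version suffices.
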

\begin{proof}
Let $\x$ be an arithmetically closed family of reals.  If $x$ is a
finite subset of $\n$, let $\ulcorner x \urcorner$ denote the code
of $x$ using G\"{o}del's coding. For every $A\in\x$, define an
associated $A'=\{\ulcorner A\cap n\urcorner\mid n\in\n\}$. Clearly
$A'$ is definable from $A$, and hence in $\x$. Observe that if
$A\neq B$, then $|A'\cap B'|<\omega$. Hence if $A\neq B$, we get
that $A'$ and $B'$ are incompatible in $\x/\fin$. It follows that
$\mathscr A=\{A'\mid A\in\x\}$ is an antichain of $\x/\fin$ of size
$|\x|$.  This shows that $\x/\fin$ always has antichains as large as
the whole poset.
\end{proof}

Thus, the poset $\x/\fin$ has the worst possible chain condition,
namely $|\x|^+$-c.c..

\begin{theorem} \label{th:countclosed} Every countably closed family
of reals is $\power(\n)$.
\end{theorem}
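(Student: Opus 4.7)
The plan is to argue by contradiction. Suppose $\x$ is arithmetically closed and $\x/\fin$ is countably closed, but some real $A\subseteq\n$ is not in $\x$. I will construct a descending $\omega$-sequence in $\x/\fin$ every lower bound of which arithmetically computes $A$, so that no lower bound can lie in $\x$.

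First I would fix a recursive bijection of $\n$ with the set $2^{<\omega}$ of finite binary strings, and let $B_n\subseteq\n$ denote the block corresponding to strings of length $n$. Writing $\chi_A\upharpoonright n$ for the initial segment $(A(0),\dots,A(n-1))$ of the characteristic sequence of $A$, set
\[
X_n \;=\; \bigcup_{m\geq n}\{\, s\in B_m : s \text{ extends } \chi_A\upharpoonright n\,\}.
\]
Each $X_n$ is recursive in the finite object $\chi_A\upharpoonright n$, hence recursive outright, hence in $\x$ by arithmetic closure; it is infinite because every block $B_m$ with $m\geq n$ contributes $2^{m-n}$ strings to it. The sequence is descending in $\x/\fin$: any string extending $\chi_A\upharpoonright(n+1)$ also extends $\chi_A\upharpoonright n$, so $X_{n+1}\subseteq X_n$.

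Next, by countable closure, pick $Y\in\x$ with $Y\subseteq_\fin X_n$ for every $n$. For each $n$ and each $\sigma\in 2^n$ set
\[
S_\sigma \;=\; \{\,y\in Y : |y|\geq n,\ y\text{ extends }\sigma\,\}.
\]
If $\sigma\neq\chi_A\upharpoonright n$, then $S_\sigma\subseteq Y\setminus X_n$ is finite, whereas $S_{\chi_A\upharpoonright n}=Y\cap X_n$ is cofinite in the infinite set $Y$ and therefore infinite. Thus $\chi_A\upharpoonright n$ is characterized as the unique $\sigma\in 2^n$ for which $S_\sigma$ is infinite, a property that is arithmetic in $Y$ uniformly in $n$. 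Consequently $A$ is arithmetic in $Y$, and by arithmetic closure $A\in\x$, contradicting the choice of $A$.

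The main conceptual hurdle is balancing two opposing demands on the coding: each $X_n$ must depend on only finitely many bits of $A$ so that arithmetic closure alone forces it into $\x$, yet the descending sequence as a whole must pin down all of $A$ in any pseudo-intersection. The tree-of-strings construction reconciles them in one stroke, because committing $X_n$ to $\chi_A\upharpoonright n$ uniformly across all higher levels $B_m$ means that the commitment is inherited by any $Y\subseteq_\fin X_n$ and can be extracted level by level from $Y$ by the simple counting argument above.
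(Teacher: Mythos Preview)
Your proof is correct and follows essentially the same strategy as the paper's: build a descending sequence in $\x/\fin$ whose $n$-th term is a recursive set determined by $\chi_A\upharpoonright n$, invoke countable closure to get a lower bound, and then recover $A$ arithmetically from that lower bound. The only differences are cosmetic---you work via an explicit bijection with $2^{<\omega}$ and phrase the argument by contradiction, whereas the paper uses a sequence-coding function $(m)_n$ and argues directly that every $A$ lies in $\x$.
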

\begin{proof}
I will show that every $A\subseteq \n$ is in $\x$. Define a sequence
of subsets $\la B_n\mid n\in\omega\ra$ by $B_n=\{m\in\n\mid
(m)_n=\chi_A(n)\}$ where $\chi_A$ is the characteristic function of
$A$. Let $A_m=\cap_{n\leq m}B_n$ and observe that each $A_m$ is
infinite. Thus, $A_0\geq A_1\geq\cdots \geq A_m\geq\ldots$ are
elements of $\x/\fin$. By countable closure, there exists $C\in
\x/\fin$ such that $C\subseteq_{\fin}A_m$ for all $m\in \n$. Thus,
$C\subseteq_\fin B_n$ for all $n\in \n$. It follows that
$A=\{n\in\n\mid\exists m\,\forall k\in C\,\text{ if }k>m, \text{
then } (k)_n=1\}$. This shows that $A$ is arithmetic in $C$, and
hence $A\in\x$ by arithmetic closure. Since $A$ was arbitrary, this
concludes the proof that $\x=\power(\n)$.
\end{proof}
The assumption that $\x$ is arithmetically closed is not necessary
for Theorem \ref{th:countclosed}. Any family of reals $\x$ such that
$\x/\fin$ is countably closed must be arithmetically closed (see
\cite{gitman:scott}) .

Enayat showed in \cite{enayat:endextensions} that {\rm ZFC} proves
the existence of an arithmetically closed family of size $\omega_1$
which collapses $\omega_1$, and hence is \emph{not} proper.

Later Enayat and Shelah showed in \cite{shelah:borel} that there is
a \emph{Borel} arithmetically closed family of size $\omega_1$ which
is \emph{not} proper as well.

I will show below that it is consistent with {\rm ZFC} that there
are continuum many proper families of size $\omega_1$ and it is
consistent with {\rm ZFC} that there are continuum many piecewise
proper families of size $\omega_2$. But first I will consider the
question of when does forcing to add new reals preserve the
properness of the reals of the ground model. I will show that if
{\rm CH} holds, forcing with a c.c.c.\ poset preserves the
properness of the reals of the ground model.

\begin{lemma}\label{le:intersect}
Let $\x_0\subseteq\x_1\subseteq\cdots\subseteq
\x_\xi\subseteq\cdots$ for $\xi<\omega_1$ be a continuous chain of
countable families of reals and let $\x=\cup_{\xi<\omega_1} \x_\xi$.
If $M$ is a countable elementary substructure of some $H_\lambda$
and $\la \x_\xi\mid\xi<\omega_1\ra\in M$, then $M\cap \x=\x_\alpha$
where $\alpha=\text{\emph{Ord}}^M\cap\omega_1$.
\end{lemma}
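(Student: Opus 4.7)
The plan is to establish the two inclusions $\x_\alpha \subseteq M \cap \x$ and $M \cap \x \subseteq \x_\alpha$ separately, using elementarity of $M$ together with the standard fact that a countable set belonging to $M$ is wholly contained in $M$. Before doing so, I would record the structural fact that $\alpha = \text{Ord}^M\cap\omega_1$ is a limit ordinal with $\omega \leq \alpha < \omega_1$. Observe first that $\omega_1 \in M$, since $\omega_1$ is definable from the sequence $\langle \x_\xi \mid \xi < \omega_1\rangle \in M$. To see that $M \cap \omega_1$ is an ordinal, note that for any $\beta \in M$ with $\beta < \omega_1$, $M$ contains a bijection $f\colon \omega \to \beta$, so every $\gamma < \beta$ has the form $f(n)$ for some $n \in \omega \subseteq M$ and hence lies in $M$ by elementarity. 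Moreover $\alpha$ cannot be a successor: if $\alpha = \beta + 1$, then $\beta \in M$ would force $\alpha = \beta + 1 \in M$, contradicting $\alpha = M \cap \omega_1$.

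For the inclusion $\x_\alpha \subseteq M \cap \x$, I would invoke continuity of the chain: since $\alpha$ is a limit ordinal, $\x_\alpha = \bigcup_{\xi < \alpha} \x_\xi$. For each $\xi < \alpha$, the ordinal $\xi$ lies in $M$, and since $\langle \x_\xi \mid \xi < \omega_1\rangle \in M$, so is $\x_\xi$. Because $\x_\xi$ is countable and $M \models$ ``$\x_\xi$ is countable'' by elementarity, $M$ contains a surjection from $\omega$ onto $\x_\xi$, whose values all lie in $M$; hence $\x_\xi \subseteq M$. Taking the union over $\xi < \alpha$ yields $\x_\alpha \subseteq M$, and since $\x_\alpha \subseteq \x$ trivially, this gives the first inclusion.

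For the reverse inclusion, I would take $A \in M \cap \x$ and consider the least $\xi < \omega_1$ with $A \in \x_\xi$. This $\xi$ is definable from $A$ and the sequence $\langle \x_\xi \mid \xi < \omega_1\rangle$, both of which lie in $M$, so by elementarity $\xi \in M$; hence $\xi < \alpha$ and therefore $A \in \x_\xi \subseteq \x_\alpha$ by monotonicity of the chain. I do not foresee any real obstacle in this argument: it is essentially a routine elementarity computation, and the only mildly subtle point is the appeal to continuity at stage $\alpha$, since $\alpha$ itself does not belong to $M$ and so $\x_\alpha$ cannot be accessed directly via definability in $M$.
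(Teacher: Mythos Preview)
Your proof is correct and follows essentially the same approach as the paper's: both directions are proved exactly as you describe, via the least-$\xi$ definability for $M\cap\x\subseteq\x_\alpha$ and via ``countable elements of $M$ are subsets of $M$'' for $\x_\alpha\subseteq M\cap\x$. You spell out more of the background scaffolding (that $M\cap\omega_1$ is a limit ordinal and that continuity of the chain is what makes $\x_\alpha=\bigcup_{\xi<\alpha}\x_\xi$), which the paper leaves implicit, but the argument is the same.
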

\begin{proof}
Let $\alpha=\text{Ord}^M\cap\omega_1$. Suppose $\xi\in\alpha$, then
$\xi\in M$, and hence $\x_\xi\in M$. Since $\x_\xi$ is countable, it
follows that $\x_\xi\subseteq M$. Thus, $\x_\alpha\subseteq\x\cap
M$. Now suppose $A\in \x\cap M$, then the least $\xi$ such that
$A\in\x_\xi$ is definable in $H_\lambda$. It follows that $\xi\in
M$, and hence $\xi\in\alpha$. Thus, $\x\cap M\subseteq \x_\alpha$.
\end{proof}
\begin{lemma}\label{le:count}
 Suppose $\p$ is a c.c.c.\ poset and $G\subseteq \q$ is $V$-generic
for a countably closed poset $\q$. Then  $\p$ remains c.c.c.\ in
$V[G]$.
\end{lemma}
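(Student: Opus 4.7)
The plan is a proof by contradiction. Assuming $\p$ fails to be c.c.c.\ in $V[G]$, I will use the countable closure of $\q$ to build an uncountable antichain of $\p$ already living in $V$, contradicting the c.c.c.\ of $\p$ in $V$.

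Suppose for a contradiction that there is a $\q$-name $\dot f$ and a condition $q_0 \in \q$ such that $q_0 \Vdash_\q$ ``$\dot f : \omega_1 \to \p$ is an injection whose range is pairwise incompatible''. Working in $V$, I construct by transfinite recursion on $\alpha < \omega_1$ a descending sequence $\langle q_\alpha : \alpha < \omega_1\rangle$ of conditions in $\q$ below $q_0$, together with elements $p_\alpha \in \p$, such that $q_\alpha \Vdash \dot f(\alpha) = \check p_\alpha$. At a successor stage $\alpha+1$, I choose $q_{\alpha+1}\le q_\alpha$ from the dense set of conditions deciding $\dot f(\alpha+1)$. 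At a countable limit stage $\alpha$, countable closure of $\q$ supplies a lower bound for the countable descending sequence $\langle q_\beta : \beta < \alpha\rangle$, which I then refine to a condition $q_\alpha$ deciding $\dot f(\alpha)$. The crucial observation permitting the recursion to reach every $\alpha < \omega_1$ is that every limit ordinal below $\omega_1$ is itself countable, so countable closure of $\q$ (not $\omega_1$-closure) is enough at each limit stage.

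Once the sequence has been built, for any $\alpha < \beta < \omega_1$ we have $q_\beta \le q_\alpha$, so $q_\beta$ simultaneously forces $\dot f(\alpha) = \check p_\alpha$ and $\dot f(\beta) = \check p_\beta$, and it also forces these two values to be distinct and incompatible in $\p$. Since $\p$ lies in $V$ with the same underlying set and order, both distinctness and incompatibility in $\p$ are absolute between $V$ and $V[G]$; hence $p_\alpha \ne p_\beta$ and $p_\alpha \perp p_\beta$ already in $V$. Therefore $\{p_\alpha : \alpha < \omega_1\}$ is an uncountable antichain of $\p$ in $V$, contradicting the c.c.c.\ of $\p$ in $V$.

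The main place to be careful is the instinct to demand $\omega_1$-closure in order to maintain a descending $\omega_1$-sequence in $\q$; in fact only lower bounds for descending sequences of each countable length are needed, and these are precisely what countable closure of $\q$ delivers. The rest of the argument is a routine bookkeeping recursion combined with absoluteness of the forcing relation's consequences on elements of a ground-model poset.
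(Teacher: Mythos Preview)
Your proof is correct and follows essentially the same approach as the paper: both assume a name for an uncountable antichain in the extension, use countable closure of $\q$ to build an $\omega_1$-descending sequence of conditions each deciding a new element, and conclude that the decided elements form an uncountable antichain of $\p$ already in $V$. The only cosmetic difference is that the paper names the antichain itself and extracts members, while you name an enumeration $\dot f$ and decide its values; the arguments are otherwise identical.
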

\begin{proof}
Suppose $\p$ does not remain c.c.c.\ in $V[G]$.  Fix a $\q$-name
$\dot{A}$ and $r\in \q$ such that $r\Vdash ``\dot{A}$ is a maximal
antichain of $\check{\p}\text{ of size }\omega_1"$. Choose $q_0\leq
r$ and $a_0\in \p$ such that $q_0\Vdash \check{a_0}\in \dot{A}$.
Suppose that we have defined $q_0\geq q_1\geq\cdots\geq
q_\xi\geq\cdots$ for $\xi<\beta$ where $\beta$ is some countable
ordinal, together with a corresponding sequence $\langle a_\xi\mid
\xi<\beta\rangle$ of elements of $\p$ such that $q_\xi\Vdash
\check{a}_\xi\in \dot{A}$ and $a_{\xi_1}\neq a_{\xi_2}$ for all
$\xi_1<\xi_2$. By countable closure of $\q$, we can find $q\in\q$
such that $q\leq q_\xi$ for all $\xi<\beta$.  Let $q_\beta\leq q$
and $a_\beta\in \p$ such that $q_\beta\Vdash \check{a}_\beta\in
\dot{A}$ and $a_\beta\neq a_\xi$ for all $\xi<\beta$.  Such
$a_\beta$ must exist since we assumed $r\Vdash ``\dot{A} \text{ is a
maximal antichain of }\check{\q}\text{ of size }\omega_1"$ and
$q\leq r$. Thus, we can build a descending sequence $\langle
q_\xi\mid \xi<\omega_1\rangle$ of elements of $\q$ and a
corresponding sequence $\langle a_\xi\mid \xi<\omega_1\rangle$ of
elements of $\p$ such that $q_\xi\Vdash \check{a}_\xi\in\dot{A}$.
But clearly $\langle a_\xi\mid \xi<\omega_1\rangle$ is an antichain
in $V$ of size $\omega_1$, which contradicts the assumption that
$\p$ was c.c.c..
\end{proof}
\begin{theorem}\label{th:oldreals}
If {\rm CH} holds, then $\power^V(\n)/\fin$ remains proper in any
generic extension by a c.c.c.\ poset.
\end{theorem}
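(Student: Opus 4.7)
The plan is to reduce to a two-step iteration argument. In $V$, I consider the product forcing $\power^V(\n)/\fin \times \p$, which as an iteration can be written either as $\power^V(\n)/\fin * \check{\p}$ or as $\p * \check{\power^V(\n)/\fin}$. First I will show that $\power^V(\n)/\fin * \check{\p}$ is proper in $V$: by Theorem \ref{th:countclosed}, the first factor $\power^V(\n)/\fin$ is countably closed in $V$ and hence proper, and after forcing with it to obtain $V[H]$, Lemma \ref{le:count} guarantees that $\p$ remains c.c.c.\ (and so proper) in $V[H]$. By Shelah's two-step iteration preservation theorem for properness (see \cite{shelah:proper}), the iteration $\power^V(\n)/\fin * \check{\p}$ is therefore proper in $V$.

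Viewing the same forcing as $\p * \check{\power^V(\n)/\fin}$, I can then apply the reverse direction of the two-step iteration theorem: since $\p$ is proper (being c.c.c.) and the iteration is proper, it follows that $\Vdash_\p$ ``$\check{\power^V(\n)/\fin}$ is proper.'' That is, in $V[G]$, the poset $\power^V(\n)/\fin$ is proper, which is the desired conclusion.

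The main obstacle in this approach is the invocation of Shelah's two-step iteration theorem, which is standard but is not proved in the excerpt. I also note that this argument does not visibly use CH, which suggests that the author may give a more direct, hands-on proof. A natural direct approach would, under CH, fix a continuous chain $\la \x_\xi \mid \xi<\omega_1 \ra$ of countable arithmetically closed families with $\power^V(\n) = \bigcup_\xi \x_\xi$, then apply Theorem \ref{th:equivproper} together with Lemma \ref{le:intersect} to reduce to handling countable $M \prec H_\lambda^{V[G]}$ with $M \cap \power^V(\n) = \x_\alpha$ for $\alpha = M \cap \omega_1$, and construct $M$-generic conditions by diagonalizing over the maximal antichains of $\power^V(\n)/\fin$ lying in $M$. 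The delicate point in that direct approach is that such antichains may be new elements of $V[G]$, so one would need to use c.c.c.\ control over their $\p$-names together with the countable closure of $\power^V(\n)/\fin$ in $V$ to extract, inside $\power^V(\n)$, the pseudo-intersections witnessing $M$-genericity.
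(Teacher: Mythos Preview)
Your first approach hinges on a ``reverse direction'' of the two-step iteration theorem: from $\p$ proper and $\p*\check\q$ proper, conclude $\Vdash_\p``\check\q$ is proper.'' This is not a theorem you can cite from \cite{shelah:proper}; only the forward direction (proper iterated with proper is proper) is proved there. The obstruction to the converse is exactly the one your own final paragraph identifies: properness of $\q$ in $V[G]$ requires $(N,\q)$-generic conditions for \emph{every} countable $N\prec H_\theta^{V[G]}$, whereas properness of $\p*\check\q$ in $V$ only supplies such conditions for $N$ of the form $M[G]$ with $M\in V$. Even when $\p$ is c.c.c., a countable $N\prec H_\theta^{V[G]}$ need not satisfy $N\cap V\in V$, so there is no direct lift. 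You are right to be suspicious that {\rm CH} never appears; that is precisely the symptom of this gap.

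Your second paragraph, by contrast, is essentially the paper's proof. The author uses {\rm CH} to write $\power^V(\n)=\bigcup_{\xi<\omega_1}\x_\xi$ as a continuous chain of countable families, restricts via Theorem~\ref{th:equivproper} to $N=M[g]$ with $M\subseteq V$ (not necessarily $M\in V$), and applies Lemma~\ref{le:intersect} to get $N\cap\power^V(\n)=\x_\alpha\in V$. This is the crucial use of {\rm CH}: it pins down the intersection as a ground-model object even though $M$ itself may not lie in $V$. The ``delicate point'' you raise is then handled by observing that each $\mathscr D\cap\x_\alpha$ lies in $V$ (since $\mathscr D\in M\subseteq V$ and $\x_\alpha\in V$), so the countable family $\mathcal D$ of such intersections, while possibly not in $V$, is covered by some countable $\mathcal D'\in V$ via the c.c.c.; one then diagonalizes inside $V$ using countable closure of $\power^V(\n)/\fin$. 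The final step---upgrading the resulting condition $A$ from $M$-generic to $M[g]$-generic---is carried out by reversing the product to $\power^V(\n)/\fin\times\p$, invoking Lemma~\ref{le:count} to keep $\p$ c.c.c.\ after the countably closed factor, and applying Theorem~\ref{th:equproper}. So your direct sketch is on target; the iteration shortcut is not.
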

\begin{proof}
Let $\p$ be a c.c.c.\ poset and fix a $V$-generic $g\subseteq \p$.
In $V$, let $\power(\n)=\cup_{\xi<\omega_1} \x_\xi$ where each
$\x_\xi$ is countable and
$\x_0\subseteq\x_1\subseteq\cdots\subseteq\x_\xi\subseteq\cdots$ is
a continuous chain. For sufficiently large cardinals $\lambda$, it
is easy to see that $H_\lambda^{V[g]}=H_\lambda[g]$. The countable
elementary substructures of $H_\lambda[g]$ of the form $M[g]$ where
$M\subseteq V$ and $\la \x_\xi\mid \xi<\omega_1\ra, \p\in M[g]$ form
a club. So by Theorem \ref{th:equivproper}, it suffices to find
generic conditions only for such elementary substructures. Fix a
countable $M[g]\prec H_\lambda[g]$ in $V[g]$ such that $\la
\x_\xi\mid\xi<\omega_1\ra, \p\in M[g]$ and $M\subseteq V$. We need
to prove that for every $B\in M[g]\cap \power(\n)^V/\fin$, there
exists $A\in \power(\n)^V/\fin$ such that $A\subseteq_\fin B$ and
$A$ is $M[g]$-generic in $V[g]$. By Lemma \ref{le:intersect},
$M[g]\cap \power^V(\n)=\x_\alpha$ where $\alpha=\text{Ord}^M\cap
\omega_1$. Let $\mathcal D=\{\mathscr D\cap \x_\alpha\mid \mathscr
D\in M\text{ and }\mathscr D\text{ dense in }\power^V(\n)/\fin\}$.
Observe that $\mathcal D\subseteq V$ and $|\mathcal D|=\omega$.
Since $\p$ is c.c.c., we can show that there is $\mathcal
D'\supseteq \mathcal D$ of size $\omega$ in $V$. In $V$, use
$\mathcal D'$ and $\x_\alpha$ to define $\mathcal E=\{\mathscr D\in
\mathcal D'\mid \mathscr D\text{ dense in }\x_\alpha\}$. It is clear
that $\mathcal D\subseteq \mathcal E$. By the countable closure of
$\power(\n)^V/\fin$ in $V$, we can find an infinite $A\subseteq_\fin
B$ such that every $\mathscr D\in \mathcal E$ contains some $C$
above $A$. It follows that $A$ is $M$-generic. In fact, I will show
that $A$ is $M[g]$-generic. To verify this, we need to check that
whenever $A\in G$ and $G\subseteq \power(\n)^V/\fin$ is
$V[g]$-generic, then $M[g][G]\cap \text{Ord}=M[g]\cap \text{Ord}$.
Since we are forcing with $\p\times\power(\n)/\fin$, we have
$M[g][G]=M[G][g]$. It is clear that $M\cap \text{Ord}=M[g]\cap
\text{Ord}$, and so it remains to show that $M[G][g]\cap
\text{Ord}=M\cap \text{Ord}$. Since $A\in G$ and $A$ is $M$-generic,
we have that $M[G]\cap \text{Ord}=M\cap \text{Ord}$. The poset $\p$
remains c.c.c.\ in $V[G]$ by Lemma \ref{le:count} since
$\power(\n)^V/\fin$ is countably closed. Also we have $M[G]\prec
H_\lambda[G]$, even though $M[G]$ itself may not be an element of
$V[G]$. Let $\mathscr A$ be a maximal antichain of $\p$ in $M[G]$,
then $\mathscr A\in H_\lambda[G]$, and hence $\mathscr A$ has size
$\omega$. It follows that $\mathscr A\subseteq M[G]$. Since $g$ is
$V[G]$-generic, it must meet $\mathscr A$. So $g$ is $M[G]$-generic,
and hence $M[G][g]\cap \text{Ord}=M[G]\cap \text{Ord}$.
\end{proof}
It follows that it is consistent that there are uncountable proper
families other than $\power(\n)$. Start in any universe satisfying
{\rm CH} and force to add a Cohen real. In the resulting generic
extension, the reals of $V$ will be an uncountable proper family.

Next, I will show how to force the existence of many proper
families. I will begin by looking at what properness translates into
in this specific context.
\begin{prop}\label{prop:proper} Suppose $\x$ is a family of reals and
$\mathscr A$ is a countable antichain of $\x/\fin$.  Then  for $B\in
\x$:
\begin{itemize}
\item[(1)] Every $V$-generic filter $G\subseteq \x/\fin$ containing $B$ meets
$\mathscr A$.
\item[(2)] There exists a finite list $A_0,\ldots, A_n\in \mathscr A$
such that $B\subseteq_\fin A_0\cup\ldots\cup A_n$.
\end{itemize}
\end{prop}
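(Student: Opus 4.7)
My plan is to prove the equivalence of (1) and (2), establishing each implication separately.

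For $(2) \Rightarrow (1)$, I would argue by pigeonhole and density. Assume $B \subseteq_\fin A_0 \cup \cdots \cup A_n$. Let $D = \{C \in \x/\fin : C \leq A_i \text{ for some } i \leq n\}$. Given any condition $C \leq B$ in $\x/\fin$, the almost-inclusion $C \subseteq_\fin A_0 \cup \cdots \cup A_n$ forces, by pigeonhole on an infinite set partitioned into finitely many pieces modulo finite, some $C \cap A_i$ to be infinite; arithmetic closure then places $C \cap A_i \in \x$ as a common extension below $C$ and $A_i$. Thus $D$ is dense below $B$, and any $V$-generic filter $G$ containing $B$ meets $D$, whence some $A_i \in \mathscr A$ lies in $G$ by upward closure.

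For $(1) \Rightarrow (2)$ I would proceed by contrapositive. Suppose (2) fails and enumerate $\mathscr A = \{A_k : k < \omega\}$ in $V$. Setting $B_k := B \setminus (A_0 \cup \cdots \cup A_k)$, arithmetic closure places each $B_k$ in $\x$, and each $B_k$ is infinite by the failure of (2). My goal would then be to produce an infinite $C \in \x$ with $C \subseteq B$ and $|C \cap A_k| < \omega$ for every $k$. Any $V$-generic filter $G$ containing such a $C$ must have $B \in G$ by upward closure, while every $A_k$ stays out of $G$ by incompatibility with $C$, yielding a $V$-generic $G \ni B$ with $G \cap \mathscr A = \emptyset$ and contradicting (1).

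I expect the main obstacle to lie precisely in placing such a diagonalizing $C$ inside $\x$. A naive pointwise diagonal, choosing $n_k \in B_k$ with $n_k > n_{k-1}$ and setting $C = \{n_k : k < \omega\}$, satisfies $C \cap A_k \subseteq \{n_0, \ldots, n_{k-1}\}$ and $C \subseteq B$, but this $C$ only lives in $V$ and its definition depends on the chosen enumeration of $\mathscr A$, which in general need not be coded by any single element of $\x$. The plan is to bridge this by selecting the $n_k$ uniformly arithmetically from a parameter already in $\x$, so that $C$ becomes arithmetically definable from that parameter and therefore lands in $\x$ by arithmetic closure; this is where the specific strength of arithmetic closure over mere Boolean closure should do the decisive work.
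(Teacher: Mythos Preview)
Your overall strategy matches the paper's. For $(2)\Rightarrow(1)$ the paper argues slightly differently---it simply notes that a $V$-generic filter on $\x/\fin$ is an ultrafilter, so if $B\subseteq_\fin A_0\cup\cdots\cup A_n$ and $B\in G$ then some $A_i$ lies in $G$---but your density argument is an equally valid route to the same conclusion.

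For $(1)\Rightarrow(2)$ the paper does exactly what you outline: it enumerates $\mathscr A=\{A_n:n<\omega\}$, builds the diagonal set $C=\{c_n:n<\omega\}$ by taking $c_n$ to be the least element of $B\cap(\n\setminus A_0)\cap\cdots\cap(\n\setminus A_n)$ above $c_{n-1}$, and then writes ``Let $G$ be a $V$-generic filter containing $C$.'' The paper does \emph{not} pause over whether $C\in\x$; it simply uses $C$ as a condition.

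So the obstacle you flag is one the paper does not address, and your instinct that something needs checking here is sound. Unfortunately your proposed repair does not close the gap. Arithmetic closure would place $C$ in $\x$ only if $C$ were arithmetic in a parameter already in $\x$, and the natural such parameter---a code for the enumeration $\langle A_n:n<\omega\rangle$---is not assumed to lie in $\x$. The hypothesis says only that each $A_n$ individually belongs to $\x$, not that the countable sequence is coded there; a countable subset of $\x$ need not be a coded sequence in $\x$. There is therefore no way to choose the $n_k$ ``uniformly arithmetically from a parameter already in $\x$'' in general, and arithmetic closure by itself does not supply the missing step. In short: your plan coincides with the paper's argument, you are right that membership of $C$ in $\x$ is the crux, but the arithmetic-closure idea you sketch cannot carry that weight without an additional hypothesis on $\mathscr A$.
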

\begin{proof}$\,$\\
(2)$\Longrightarrow$(1): Suppose $B\subseteq_\fin A_0\cup\ldots\cup
A_n$ for some $A_0,\ldots, A_n\in \mathscr A$. Since a $V$-generic
filter $G$ is an ultrafilter, one of the $A_i$ must be in $G$.\\
(1)$\Longrightarrow$(2): Assume that every $V$-generic filter $G$
containing $B$ meets $\mathscr A$ and suppose toward a contradiction
that $(2)$ does not hold. Enumerate $\mathscr A=\{A_0,A_1,\ldots,
A_n,\ldots\}$. It follows that for all $n\in\n$, the intersection
$B\cap (\n-A_0)\cap\cdots\cap(\n-A_n)$ is infinite. Define
$C=\{c_n\mid n\in\n\}$ such that $c_0$ is the least element of
$B\cap(\n-A_0)$ and $c_{n+1}$ is the least element of
$B\cap(\n-A_0)\cap\cdots\cap (\n-A_{n+1})$ greater than $c_n$.
Clearly $C\subseteq B$ and $C\subseteq_\fin (\n-A_n)$ for all $n\in
\n$.  Let $G$ be a $V$-generic filter containing $C$, then $B\in G$
and $(\n-A_n)\in G$ for all $n\in\n$. But this contradicts our
assumption that $G$ meets $\mathscr A$.
\end{proof}
\begin{corollary}\label{cor:properequiv}
A family of reals $\x$ is proper if and only if there exists
$\lambda>2^{|\x|}$ such that for every countable $M\prec H_\lambda$
containing $\x$, whenever $C\in M\cap \x/\fin$, then there is
$B\subseteq_\fin C$ in $\x/\fin$ such that for every maximal
antichain $\mathscr A\in M$ of $\x/\fin$, there are $A_0,\ldots,
A_n\in\mathscr A\cap M$ with $B\subseteq_\fin A_0\cup\cdots\cup
A_n$.
\end{corollary}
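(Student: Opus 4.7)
The plan is to derive this combinatorial characterization directly from the abstract definition of ``$M$-generic condition'' by using Proposition \ref{prop:proper} to translate the filter-meeting property into the corresponding almost-inclusion statement; the equivalence with properness will then fall out from the definition together with Theorem \ref{th:equivproper}.

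First I would fix $\lambda > 2^{|\x|}$, a countable $M \prec H_\lambda$ with $\x \in M$, and a condition $C \in M \cap \x/\fin$. Given any maximal antichain $\mathscr A \in M$ of $\x/\fin$, the set $\mathscr A \cap M$ is a countable antichain of $\x/\fin$: countable because $M$ is, and an antichain as a subset of one. Proposition \ref{prop:proper} therefore applies to $\mathscr A \cap M$, yielding, for any $B \in \x/\fin$, the equivalence between ``every $V$-generic $G \ni B$ meets $\mathscr A \cap M$'' and ``there exist $A_0, \ldots, A_n \in \mathscr A \cap M$ with $B \subseteq_\fin A_0 \cup \cdots \cup A_n$.'' Quantifying over all maximal antichains $\mathscr A \in M$, this shows that $B$ is an $M$-generic condition in the sense of the paper's definition if and only if $B$ satisfies the combinatorial condition stated in the corollary.

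With this translation in hand, both directions are immediate. Forward: if $\x$ is proper then for any $\lambda > 2^{|\x|}$ and any countable $M \prec H_\lambda$ with $\x \in M$, the definition of properness provides an $M$-generic $B \leq C$, and the translation delivers the combinatorial clause. Converse: assume the combinatorial clause holds for some $\lambda > 2^{|\x|}$ and every countable $M \prec H_\lambda$ containing $\x$; the translation converts this into the existence of an $M$-generic condition below every $C \in M \cap \x/\fin$, for every such $M$, and since the set of all such $M$ contains a club in $[H_\lambda]^\omega$, Theorem \ref{th:equivproper} concludes that $\x$ is proper. The only subtlety worth flagging is that Proposition \ref{prop:proper} must be applied to $\mathscr A \cap M$ rather than to $\mathscr A$, since maximal antichains in $\x/\fin$ can be uncountable (indeed of size $|\x|$ by Theorem \ref{th:wrong}); this is precisely why the paper's definition of $M$-generic inserts the intersection with $M$.
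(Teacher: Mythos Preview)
Your proposal is correct and follows essentially the same route as the paper: both directions reduce to applying Proposition~\ref{prop:proper} to the countable antichain $\mathscr A\cap M$ in order to translate the $M$-generic condition into the finite almost-cover statement. The only cosmetic difference is that you explicitly invoke Theorem~\ref{th:equivproper} in the converse (since the hypothesis fixes a single $\lambda$), whereas the paper simply says ``since $M$ was arbitrary''; your version is arguably more careful on this point.
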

\begin{proof}$\,$\\
($\Longrightarrow$): Suppose $\x$ is proper. Then there is
$\lambda>2^{|\x|}$ such that for every countable $M\prec H_\lambda$
containing $\x$ and every $C\in M\cap \x/\fin$, there is an
$M$-generic $B\subseteq_\fin C$ in $\x/\fin$. Fix a countable
$M\prec H_\lambda$ containing $\x$ and $C\in M\cap \x/\fin$. Let
$B\subseteq_\fin C$ be $M$-generic. Thus, every $V$-generic filter
containing $B$ must meet $\mathscr A\cap M$ for every maximal
antichain $\mathscr A\in M$ of $\x/\fin$. But since $\mathscr A\cap
M$ is countable, by Proposition \ref{prop:proper}, there exist
$A_0,\ldots,A_n\in \mathscr A\cap M$
such that $B\subseteq_\fin A_0\cup\cdots\cup A_n$.\\
($\Longleftarrow$): Suppose that there is $\lambda>2^{|\x|}$ such
that for every countable \hbox{$M\prec H_\lambda$} containing $\x$,
whenever $C\in M\cap \x/\fin$, then there is $B\subseteq_\fin C$ in
$\x/\fin$ such that for every maximal antichain $\mathscr A\in M$ of
$\x/\fin$, there are $A_0,\ldots, A_n\in\mathscr A\cap M$ with
$B\subseteq_\fin A_0\cup\cdots\cup A_n$. Fix a countable $M\prec
H_\lambda$ with $\x\in M$ and $C\in M\cap \x/\fin$. Let
$B\subseteq_\fin C$ be as above. By Proposition \ref{prop:proper},
every $V$-generic filter $G$ containing $B$ must meet $\mathscr
A\cap M$ for every maximal antichain $\mathscr A\in M$. Thus, $B$ is
$M$-generic. Since $M$ was arbitrary, we can conclude that $\x$ is
proper.
\end{proof}
The hypothesis of Corollary \ref{cor:properequiv} can be weakened,
by Theorem \ref{th:equivproper}, to finding for some $H_\lambda$,
only a club of countable $M$ having the desired property.

The next definition is key to all the remaining arguments in the
paper.
\begin{definition}
Let $\x$ be a countable family of reals, let $\mathcal D$ be some
collection of dense subsets of $\x/\fin$, and let $B\in \x$. We say
that an infinite set $A\subseteq\n$ is $\langle \x,\mathcal
D\rangle$\emph{-generic below} $B$ if $A\subseteq_\fin B$ and for
every $\mathscr D\in\mathcal D$, there is $C\in \mathscr D$ such
that $A\subseteq_\fin C$.
\end{definition}

Here one should think of the context of having some large family
$\y\in M\prec H_\lambda$ for a countable $M$, $\x=\y\cap M$, and
$\mathcal D=\{\mathscr D\cap M\mid \mathscr D\in M\text{ and
}\mathscr D\text{ dense in }\y/\fin\}$. We think of $A$ as coming
from the large family $\y$ and the requirement for $A$ to be
$\langle \x,\mathcal D\rangle$-generic is a strengthening of the
requirement to be $M$-generic.
\begin{lemma}\label{f:gen_elt}
Let $\x$ be a countable family.  Assume that $B\in\x/\fin$ and
$G\subseteq \x/\fin$ is a $V$-generic filter containing $B$. Then in
$V[G]$, there is an infinite $A\subseteq\n$ such that
$A\subseteq_\fin C$ for all $C\in G$. Furthermore, if $\mathcal D$
is the collection of dense subsets of $\x/\fin$ of $V$, then such an
$A$ is $\langle \x,\mathcal D\rangle$-generic below $B$.
\end{lemma}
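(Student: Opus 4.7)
The plan is to work entirely inside $V[G]$ and construct $A$ by a diagonal argument against an enumeration of $G$. Since $\x$ is countable in $V$, the poset $\x/\fin$ is countable, so $G$ is a countable filter in $V[G]$, and we may enumerate it as $G=\{C_n\mid n\in\omega\}$ with $C_0=B$ (which is legitimate because $B\in G$ by hypothesis).

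The first step is to thin the enumeration into a descending chain that still lies in $G$. Set $D_0=C_0=B$ and, given $D_n\in G$, use that $G$ is a filter to pick $D_{n+1}\in G$ with $D_{n+1}\leq D_n$ and $D_{n+1}\leq C_{n+1}$ in $\x/\fin$; that is, $D_{n+1}\subseteq_\fin D_n$ and $D_{n+1}\subseteq_\fin C_{n+1}$. Thus $D_n\subseteq_\fin C_k$ whenever $k\le n$, and each $D_n$ is infinite (being an element of $\x/\fin$).

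The second step is the diagonalization. Since each $D_n$ is infinite, we can recursively choose $a_n\in D_n$ with $a_0<a_1<\cdots$, and set $A=\{a_n\mid n\in\omega\}$. Then $A$ is an infinite subset of $\n$, and for each fixed $k$ we have $a_n\in D_n\subseteq_\fin D_k$ for all $n\ge k$; hence all but finitely many elements of $A$ lie in $D_k$, so $A\subseteq_\fin D_k$. Given any $C\in G$, we have $C=C_k$ for some $k$, and then $A\subseteq_\fin D_k\subseteq_\fin C_k=C$, which gives the first assertion.

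For the furthermore part, let $\mathscr D\in V$ be dense in $\x/\fin$. By genericity of $G$ over $V$, there is some $C\in G\cap\mathscr D$, and by what we just proved $A\subseteq_\fin C$. Combined with $A\subseteq_\fin B$ (the case $C=B$), this is exactly the definition of $A$ being $\la\x,\mathcal D\ra$-generic below $B$. There is no real obstacle here; the only point that requires a moment's thought is that the enumeration of $G$ is carried out in $V[G]$, not $V$, which is harmless because the construction of $A$ only needs to take place in $V[G]$.
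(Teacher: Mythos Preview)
Your overall strategy matches the paper's: both use that $G$ is countable and directed in $V[G]$ to produce a pseudo-intersection $A$, and both dispatch the ``furthermore'' clause by noting that $G$ meets every dense set from $V$. The paper simply asserts that a countable directed family of infinite sets under $\subseteq_\fin$ has such a pseudo-intersection; you spell out the diagonalization explicitly.

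However, your explicit diagonalization has a gap. You choose $a_n\in D_n$ and then argue that since $D_n\subseteq_\fin D_k$ for $n\ge k$, all but finitely many $a_n$ lie in $D_k$. This does not follow: $D_n\subseteq_\fin D_k$ only says $D_n\setminus D_k$ is finite, and $a_n$ may land in that finite exceptional set for each $n$ separately. For a concrete failure, let $D_0$ be the even numbers and $D_n=D_0\cup\{2n-1\}$ for $n\ge 1$; then $D_{n+1}\subseteq_\fin D_n$ for every $n$, yet the choice $a_0=0$ and $a_n=2n-1$ for $n\ge 1$ is strictly increasing with $a_n\in D_n$, while $A\setminus D_0=\{1,3,5,\ldots\}$ is infinite. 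The fix is standard: at stage $n$ choose $a_n\in\bigcap_{i\le n}D_i$ with $a_n>a_{n-1}$; this intersection is infinite because $D_n$ is infinite and $D_n\setminus D_i$ is finite for each $i<n$. With this choice $\{a_m:m\ge k\}\subseteq D_k$ outright, so $A\subseteq_\fin D_k$ as desired, and the rest of your argument goes through unchanged.
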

\begin{proof}
Since $G$ is countable and directed in $V[G]$, there exists an
infinite $A\subseteq\n$ such that $A\subseteq_\fin C$ for all $C\in
G$. For the ``furthermore" part, fix a dense subset $\mathscr D$ of
$\x/\fin$ in $V$. Since there is $C\in G\cap \mathscr D$, we have
$A\subseteq_\fin C$. It is clear that $A\subseteq_\fin B$ since
$B\in G$.
\end{proof}
\begin{lemma}\label{le:gen_elt}
Let $\x_0\subseteq\x_1\subseteq\cdots\subseteq
\x_\xi\subseteq\cdots$ for $\xi<\omega_1$ be a continuous chain of
countable families of reals and let $\x=\cup_{\xi<\omega_1} \x_\xi$.
Assume that for every $\xi<\omega_1$, if $B\in \x_\xi$ and $\mathcal
D$ is a countable collection of dense subsets of $\x_\xi$, there is
$A\in \x/\fin$ that is $\langle \x_\xi,\mathcal D\rangle$-generic
below $B$. Then $\x$ is proper.
\end{lemma}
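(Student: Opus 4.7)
The plan is to verify the hypothesis of Corollary \ref{cor:properequiv} (in its weakened club form) by choosing, for each countable $M\prec H_\lambda$ in a suitable club, a careful countable collection $\mathcal D$ of dense subsets of $\x_\alpha/\fin$ and then appealing to the hypothesis of the lemma to produce the required $B$.

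Fix $\lambda>2^{|\x|}$ large enough that the chain $\la \x_\xi\mid\xi<\omega_1\ra$ lies in $H_\lambda$, and work with the club of countable $M\prec H_\lambda$ containing $\x$ and this chain. For such $M$, let $\alpha=\text{Ord}^M\cap\omega_1$, so that Lemma \ref{le:intersect} gives $M\cap\x=\x_\alpha$. Given any maximal antichain $\mathscr A\in M$ of $\x/\fin$, let $\mathscr D_{\mathscr A}=\{D\in\x/\fin\mid D\subseteq_\fin A \text{ for some }A\in\mathscr A\}$; this is dense in $\x/\fin$ by maximality, and belongs to $M$. The key observation is that $\mathscr D_{\mathscr A}\cap\x_\alpha$ is dense in $\x_\alpha/\fin$: for any $C\in\x_\alpha\subseteq M$, elementarity applied to the statement ``some element of $\mathscr A$ is compatible with $C$'' produces some $A\in\mathscr A\cap M$ compatible with $C$, and then a further application of elementarity produces $D\in\x\cap M=\x_\alpha$ with $D\subseteq_\fin C$ and $D\subseteq_\fin A$, so $D\in\mathscr D_{\mathscr A}\cap\x_\alpha$.

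Since $M$ is countable, there are only countably many maximal antichains $\mathscr A\in M$ of $\x/\fin$, so the collection $\mathcal D=\{\mathscr D_{\mathscr A}\cap\x_\alpha\mid\mathscr A\in M\text{ maximal antichain of }\x/\fin\}$ is a countable family of dense subsets of $\x_\alpha/\fin$. Given $C\in M\cap\x/\fin=\x_\alpha$, the hypothesis of the lemma supplies an infinite $B\in\x/\fin$ that is $\la\x_\alpha,\mathcal D\ra$-generic below $C$. Unwinding the definition, for every maximal antichain $\mathscr A\in M$ there is some $D\in\mathscr D_{\mathscr A}\cap\x_\alpha$ with $B\subseteq_\fin D$, and by construction of $\mathscr D_{\mathscr A}$ this $D$ satisfies $D\subseteq_\fin A$ for some $A\in\mathscr A\cap\x_\alpha=\mathscr A\cap M$. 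Hence $B\subseteq_\fin A$, which is actually stronger than the finite-union condition required by Corollary \ref{cor:properequiv}, so $\x$ is proper.

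The only subtle step is the density of $\mathscr D_{\mathscr A}\cap\x_\alpha$ in $\x_\alpha/\fin$, since a priori one gets density only in $\x/\fin$; this is where elementarity of $M$ and the identification $M\cap\x=\x_\alpha$ from Lemma \ref{le:intersect} are both essential. Once that density is in hand, the hypothesis of the lemma does all the remaining work, and the argument is essentially bookkeeping.
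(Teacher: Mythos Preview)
Your proof is correct and follows essentially the same route as the paper's: restrict to the club of countable $M\prec H_\lambda$ containing the chain, use Lemma~\ref{le:intersect} to identify $M\cap\x=\x_\alpha$, assemble a countable family $\mathcal D$ of dense subsets of $\x_\alpha/\fin$ coming from $M$, and apply the hypothesis to produce the generic condition. The only differences are cosmetic---you work with the dense sets attached to maximal antichains and invoke Corollary~\ref{cor:properequiv}, whereas the paper takes $\mathcal D=\{\mathscr D\cap M\mid \mathscr D\in M\text{ dense in }\x/\fin\}$ and asserts $M$-genericity directly---and you are in fact more careful than the paper in verifying that the restrictions $\mathscr D_{\mathscr A}\cap\x_\alpha$ are dense in $\x_\alpha/\fin$ (one more elementarity application, of the same flavor, gives the witness $A\in\mathscr A\cap M$ at the end).
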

\begin{proof}
Fix a countable $M\prec H_\lambda$ such that $\langle \x_\xi\mid
\xi<\omega_1\rangle\in M$. It suffices to show that generic
conditions exist for such $M$ since these form a club. By Lemma
\ref{le:intersect}, $\x\cap M=\x_\alpha$ where
$\alpha=\text{Ord}^M\cap \omega_1$. Fix $B\in\x_\alpha$ and let
$\mathcal D=\{\mathscr D\cap M\mid \mathscr D\in M \text{ and
}\mathscr D \text{ dense in }\x/\fin\}$. By hypothesis, there is
$A\in\x/\fin$ that is $\langle\x_\alpha,\mathcal D\rangle$-generic
below $B$. Clearly $A$ is $M$-generic.  Thus, we were able to find
an $M$-generic element below every $B\in M\cap \x/\fin$.
\end{proof}
We are finally ready to show how to force the existence of a proper
family of size $\omega_1$.
\begin{theorem}\label{th:forcing}
There is a generic extension of $V$ by a c.c.c.\ poset that
satisfies $\neg{\rm CH}$ and contains a proper family of reals of
size $\omega_1$.
\end{theorem}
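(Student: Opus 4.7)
The plan is to take $\power^V(\n)$ inside a c.c.c.\ extension that adjoins enough new reals to force $\neg{\rm CH}$, and then to invoke Theorem \ref{th:oldreals} to conclude that its properness is preserved. Working under the standing assumption $V\models{\rm CH}$ (as made explicit in the remark following Theorem \ref{th:oldreals}), I would let $\p$ be the finite-support product of $\omega_2$ copies of Cohen forcing, computed in $V$. A standard $\Delta$-system argument shows $\p$ is c.c.c., and a nice-names count gives $(2^{\aleph_0})^{V[G]}=\aleph_2$, so $V[G]\models\neg{\rm CH}$.

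Once a $V$-generic $G\subseteq\p$ is fixed, the verification is essentially immediate from Theorem \ref{th:oldreals}. Since $\p$ is c.c.c., cardinals are preserved, so ${\rm CH}$ in $V$ gives $|\power^V(\n)|=\omega_1$ in $V[G]$. Arithmetic closure of $\power^V(\n)$ in $V[G]$ is automatic: if $A\in\power^V(\n)$ and $B\subseteq\n$ is arithmetic in $A$, then the defining first-order arithmetic formula is absolute between $V$ and $V[G]$, so $B\in V$ and hence $B\in\power^V(\n)$. By Theorem \ref{th:oldreals}, $\power^V(\n)/\fin$ remains proper in $V[G]$. Therefore $\power^V(\n)$ is the required proper family of reals of size $\omega_1$ living inside a c.c.c.\ extension $V[G]$ in which ${\rm CH}$ fails.

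There is no genuine obstacle here; the statement is essentially a direct corollary of Theorem \ref{th:oldreals}. The only design choice is to pick a c.c.c.\ forcing that makes the continuum large, and adjoining $\omega_2$ Cohen reals is the canonical such instance.
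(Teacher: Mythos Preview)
Your argument is correct under the additional hypothesis $V\models{\rm CH}$, but there is no standing {\rm CH} assumption in the paper; the remark following Theorem~\ref{th:oldreals} merely records a consistency consequence. Theorem~\ref{th:forcing} is stated as a ZFC theorem for arbitrary $V$, and the extension is required to be by a c.c.c.\ poset. If $V\models\neg{\rm CH}$, you cannot first pass to a model of {\rm CH} by a c.c.c.\ forcing, so your reduction to Theorem~\ref{th:oldreals} does not go through. This is a genuine gap: in the $\neg{\rm CH}$ case $\power^V(\n)$ already has size greater than $\omega_1$, and nothing in your argument produces an $\omega_1$-sized proper subfamily by c.c.c.\ means.

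The paper's proof is substantively different and handles exactly this case. It assumes without loss of generality that $V\models\neg{\rm CH}$ (which \emph{is} c.c.c.-forceable, unlike {\rm CH}) and then runs a finite-support c.c.c.\ iteration of length $\omega_1$ that builds a continuous chain $\x_0\subseteq\x_1\subseteq\cdots$ of countable arithmetically closed families. At each stage one forces with some $\x_\xi/\fin$ below one of its elements, and the resulting generic real is folded into the next family; a bookkeeping function guarantees that each $\x_\xi$ is hit below each of its elements cofinally often. The union $\x=\bigcup_{\xi<\omega_1}\x_\xi$ then satisfies the hypothesis of Lemma~\ref{le:gen_elt} and is therefore proper. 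Beyond closing the gap, this construction is what the paper reuses in Theorems~\ref{th:continuum} and~\ref{th:piecewise}, where one needs many \emph{distinct} proper families, or proper families extending a prescribed family from the ground model --- neither of which your $\power^V(\n)$ approach supplies.
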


\begin{proof}
First, note that we can assume without loss of generality that
$V\models \neg{\rm CH}$ since this is forceable by a c.c.c.\
forcing.

The forcing to add a proper family of reals will be a c.c.c.\ finite
support iteration $\p$ of length $\omega_1$. The iteration $\p$ will
add, step-by-step, a continuous chain
$\x_0\subseteq\x_1\subseteq\cdots\subseteq\x_\xi\subseteq\cdots$ for
$\xi<\omega_1$ of countable arithmetically closed families such that
$\cup_{\xi<\omega_1} \x_\xi$ will have the property of Lemma
\ref{le:gen_elt}. The idea will be to obtain generic elements for
$\x_\xi$, as in Lemma \ref{f:gen_elt}, by adding generic filters.
Once $\x_\xi$ has been constructed, I will force over $\x_\xi/\fin$
below every one of its elements cofinally often before the iteration
is over.  Every time such a forcing is done, I will obtain a generic
element for a new collection of dense sets. This element will be
added to $\x_{\delta+1}$ where $\delta$ is the stage at which the
forcing was done.

Fix a bookkeeping function $f$ mapping $\omega_1$ onto
$\omega_1\times\omega$, having the properties that every pair
$\langle \alpha, n\rangle$ appears cofinally often in the range and
if $f(\xi)=\langle \alpha, n\rangle$, then $\alpha\leq \xi$.  Let
$\x_0$ be any countable arithmetically closed family and fix an
enumeration $\x_0=\{B_0^0,B_1^0,\ldots, B_n^0\ldots\}$. Each
subsequent $\x_\xi$ will be created in $V^{\p_\xi}$. Suppose
$\lambda$ is a limit and $G_\lambda$ is generic for $\p_\lambda$. In
$V[G_\lambda]$, define $\x_\lambda=\cup_{\xi<\lambda} \x_\xi$ and
fix an enumeration
$\x_\lambda=\{B_0^\lambda,B_1^\lambda,\ldots,B_n^\lambda,\ldots\}$.
Consult $f(\lambda)=\langle \xi,n\rangle$ and define
$\dot{\q}_\lambda=\x_\xi/\fin$ below $B_n^\xi$. Suppose
$\delta=\beta+1$, then $\p_\delta=\p_\beta*\dot{\q}_\beta$ where
$\dot{\q}_\beta$ is $\x_\xi/\fin$ for some $\xi\leq\beta$ below one
of its elements.  In $V[G_\delta]=V[G_\beta][H]$, let
$A\subseteq_\fin B$ for all $B\in H$ and define $\x_\delta$ to be
the arithmetic closure of $\x_\beta$ and $A$.  Also in
$V[G_\delta]$, fix an enumeration
$\x_\delta=\{B_0^\delta,B_1^\delta,\ldots,B_n^\delta,\ldots\}$.
Consult $f(\delta)=\langle \xi,n\rangle$ and define
$\dot{\q}_\delta=\x_\xi/\fin$ below $B_n^\xi$. At limits, use finite
support.

The poset $\p$ is c.c.c.\ since it is a finite support iteration of
c.c.c.\ posets (see \cite{jech:settheory}, p.\ 271).  Let $G$ be
$V$-generic for $\p$. It should be clear that we can use $G$ in
$V[G]$ to construct an arithmetically closed Scott set
$\x=\cup_{\xi<\omega_1}\x_\xi$. A standard nice name counting
argument shows that $(2^{\omega})^V=(2^{\omega})^{V[G]}$. Since we
assumed at the beginning that $V\models \neg{\rm CH}$, it follows
that $V[G]\models\neg{\rm CH}$.

Finally, we must see that $\x$ satisfies the hypothesis of Lemma
\ref{le:gen_elt} in $V[G]$. Fix $\x_\xi$, a set $B\in X_\xi$, and a
countable collection $\mathcal D$ of dense subsets of $\x_\xi/\fin$.
Since the poset $\p$ is a finite support c.c.c.\ iteration and all
elements of $\mathcal D$ are countable, they must appear at some
stage $\alpha$ below $\omega_1$. Since we force with $\x_\xi/\fin$
below $B$ cofinally often, we have added a $\langle \x_\xi, \mathcal
D\rangle$-generic condition below $B$ at some stage above $\alpha$.
\end{proof}
\begin{corollary}
There is a generic extension of $V$ that satisfies  {\rm CH}  and
contains a proper family of reals of size $\omega_1$ other than
$\power(\n)$.
\end{corollary}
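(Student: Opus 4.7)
The plan is to combine Theorem~\ref{th:oldreals} with a single Cohen real forced over a model of CH. Without loss of generality I would assume $V \models \text{CH}$, since CH is forceable and the conclusion only asserts the existence of \emph{some} generic extension. Under this assumption $|\power^V(\n)| = \omega_1$.

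I would then force with $\mathbb{C} = \text{Fn}(\omega, 2)$, the standard Cohen poset for adding a single real $g$. Since $\mathbb{C}$ is countable it is c.c.c., so Theorem~\ref{th:oldreals} applies and yields that $\power^V(\n)/\fin$ remains proper in $V[g]$. Arithmetic closure of $\power^V(\n)$ is absolute between $V$ and $V[g]$ (arithmetic definability in a parameter is absolute), so $\power^V(\n)$ really is a proper family in the sense of the paper, and it still has size $\omega_1$ since cardinals are preserved by c.c.c.\ forcing.

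Two easy checks complete the argument. First, CH holds in $V[g]$ by the usual nice-name count: every nice $\mathbb{C}$-name for a subset of $\omega$ is determined by $\omega$ many antichains in the countable poset $\mathbb{C}$, so the number of nice names is at most $(2^\omega)^V = \omega_1$. Second, $\power^V(\n) \neq \power(\n)^{V[g]}$ because the Cohen real $g$ itself is a subset of $\omega$ lying in $V[g] \setminus V$. Hence $\power^V(\n)$ is a proper family of size $\omega_1$ in $V[g]$ that is not all of $\power(\n)^{V[g]}$, as required.

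I do not expect any real obstacle here: essentially all the work was done in Theorem~\ref{th:oldreals}, and the corollary is merely a matter of observing that Cohen forcing over a CH model preserves CH while adding a new real. The only mild temptation to avoid would be trying to use the iteration $\p$ from Theorem~\ref{th:forcing} instead, which would force one to argue separately that the constructed family $\x$ is not all of $\power(\n)^{V[G]}$; the single-Cohen-real approach makes this difference transparent.
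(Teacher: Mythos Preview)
Your proof is correct, and it is genuinely different from the paper's own argument. The paper proceeds in the opposite direction: it assumes $\neg\text{CH}$ without loss of generality, forces with the length-$\omega_1$ iteration $\p$ of Theorem~\ref{th:forcing} to create a proper family $\x$ of size $\omega_1$, and then forces with the countably closed collapse $\q=\text{Add}(\omega_1,1)$ to restore CH; properness of $\x$ survives because countably closed forcing adds no reals and cannot disturb properness of a family of reals. Your route instead leans on Theorem~\ref{th:oldreals}: start from CH, add a single Cohen real, and take $\power^V(\n)$ itself as the witness. This is shorter and avoids both the iteration and the two-step $\p*\dot\q$ forcing; in fact the author essentially sketches your idea in the paragraph immediately following Theorem~\ref{th:oldreals}, only without noting that Cohen forcing preserves CH. What the paper's approach buys is a proper family that is not of the form $\power^W(\n)$ for any inner model $W$, which is arguably more in the spirit of the later constructions; what your approach buys is transparency, since the inequality $\power^V(\n)\neq\power^{V[g]}(\n)$ is witnessed directly by the generic real.
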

\begin{proof}
As before, we can assume without loss of generality that $V\models
\neg{\rm CH}$. Force with $\p*\dot{\q}$ where $\p$ is the forcing
iteration from Theorem \ref{th:forcing} and $\q$ is the poset which
adds a subset to $\omega_1$ with countable conditions. Let $G*H$ be
$V$-generic for $\p*\dot{\q}$, then clearly {\rm CH} holds in
$V[G][H]$. Also the family $\x$ created from $G$ remains proper in
$V[G][H]$ since $\q$ is a countably closed forcing, and therefore
cannot affect the properness of a family of reals.
\end{proof}
We can push this argument further to show that it is consistent with
{\rm ZFC} that there are \emph{continuum} many proper families of
reals of size $\omega_1$.

\begin{theorem}\label{th:continuum}
There is a generic extension of $V$ by a c.c.c.\ poset that
satisfies $\neg{\rm CH}$ and contains continuum many proper families
of reals of size $\omega_1$.
\end{theorem}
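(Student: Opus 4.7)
The plan is to modify the iteration from Theorem \ref{th:forcing} so that a single stage advances continuum many families in parallel. By preliminary c.c.c.\ forcing we may assume $V$ satisfies $2^\omega = \kappa$ for some $\kappa \geq \omega_2$ with $\kappa^\omega = \kappa$. I index candidate families by $\alpha < \kappa$, and for each $\alpha$ construct a continuous chain $\x^\alpha_0 \subseteq \x^\alpha_1 \subseteq \cdots$ of countable arithmetically closed families whose union $\x^\alpha$ will satisfy the hypothesis of Lemma \ref{le:gen_elt}.

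Fix a bookkeeping $f\colon \omega_1 \to \omega_1 \times \omega$ as in Theorem \ref{th:forcing}. The forcing $\p$ is a finite-support iteration of length $\omega_1$; at stage $\delta$ with $f(\delta) = \langle \xi, n\rangle$ the iterand is the finite-support product
\[
\dot{\q}_\delta \;=\; \prod\nolimits^{\mathrm{fin}}_{\alpha < \kappa}\bigl(\x^\alpha_\xi/\fin \upharpoonright B^\alpha_{n,\xi}\bigr),
\]
where $B^\alpha_{n,\xi}$ is the $n$-th element of a fixed enumeration of $\x^\alpha_\xi$. Each factor is countable, so a $\Delta$-system argument on the finite supports of conditions shows $\dot{\q}_\delta$ is c.c.c.; thus $\p$ is a finite-support iteration of c.c.c.\ posets and is itself c.c.c. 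From the stage-$\delta$ generic I extract, for each $\alpha$, a pseudo-intersection $A^\alpha$ as in Lemma \ref{f:gen_elt}, and let $\x^\alpha_{\delta+1}$ be the arithmetic closure of $\x^\alpha_\delta \cup \{A^\alpha\}$; take unions at limits. The coordinatewise argument of Theorem \ref{th:forcing} then shows each $\x^\alpha = \bigcup_{\xi<\omega_1} \x^\alpha_\xi$ is proper: given a countable $\mathcal D$ of dense subsets of $\x^\alpha_\xi/\fin$ in $V[G]$, by c.c.c.\ there is $\eta < \omega_1$ with $\mathcal D \in V[G_\eta]$; at any later stage $\delta$ for which $f(\delta) = \langle \xi, n\rangle$ and $B^\alpha_{n,\xi}$ is the intended target $B$, the coordinate-$\alpha$ projection of the stage-$\delta$ generic is $V[G_\delta^-]$-generic for $\x^\alpha_\xi/\fin$, so its pseudo-intersection $A^\alpha$ is $\langle \x^\alpha_\xi, \mathcal D\rangle$-generic below $B$.

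The new ingredient is distinctness. Because each iterand is itself a product over the fixed index set $\kappa$, the entire iteration factors as $\p \cong \prod\nolimits^{\mathrm{fin}}_{\alpha < \kappa} \p^\alpha$, where $\p^\alpha$ is the single-coordinate iteration using only coordinate-$\alpha$ factors; moreover $\x^\alpha$ is definable from the coordinate-$\alpha$ generic $G^\alpha$ alone, so $\x^\alpha \in V[G^\alpha]$. The product forcing intersection lemma yields $V[G^\alpha] \cap V[G^{\neg\alpha}] = V$, where $G^{\neg\alpha}$ is the joint generic for the remaining coordinates. For $\alpha \neq \beta$, $\x^\beta \subseteq V[G^\beta] \subseteq V[G^{\neg\alpha}]$, while each generic real $A^\alpha$ lies in $V[G^\alpha] \setminus V$; hence $A^\alpha \notin \x^\beta$ and $\x^\alpha \neq \x^\beta$. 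A nice-name count gives $(2^\omega)^{V[G]} = |\p|^\omega = \kappa^\omega = \kappa$, so $\{\x^\alpha : \alpha < \kappa\}$ is a collection of continuum many pairwise distinct proper families of size $\omega_1$. The main obstacle is arranging the iterands so that $\p$ genuinely factors as a product of single-family iterations while keeping each iterand c.c.c.; given that product structure, both properness within each coordinate and distinctness across coordinates follow cleanly from the earlier machinery.
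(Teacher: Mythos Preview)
Your argument is correct and reaches essentially the same forcing as the paper, but organized differently. The paper first forces $\mathrm{MA}+\neg\mathrm{CH}$ and then takes the finite-support \emph{product} $\q=\prod_{\xi<2^\omega}\p^\xi$ of copies of the iteration from Theorem~\ref{th:forcing}; Martin's Axiom is invoked solely to guarantee that this product of c.c.c.\ posets stays c.c.c. You instead run a single $\omega_1$-length \emph{iteration of products}, noting that at each stage the factors are \emph{countable} (not merely c.c.c.), so the $\Delta$-system lemma already gives c.c.c.\ for each iterand with no appeal to $\mathrm{MA}$; you then recover the product decomposition $\p\cong\prod_\alpha\p^\alpha$ after the fact, which is legitimate because the coordinates never interact and all supports are finite. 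Your distinctness argument via the mutual-genericity intersection lemma $V[G^\alpha]\cap V[G^{\neg\alpha}]=V$ is a clean alternative to the paper's more hands-on argument (which observes directly that the first pseudo-intersection $A$ added on coordinate $\beta$ reconstructs a filter generic over $V[G\upharpoonright\beta]$). For properness the two arguments are really the same: both reduce to the fact that the coordinate-$\alpha$ filter added at a late enough stage is generic over a model already containing the given countable $\mathcal D$. The net trade-off is that you avoid the preliminary $\mathrm{MA}$ forcing at the cost of having to justify the iteration--product factorization, which is routine for finite supports when each coordinate's iterand depends only on that coordinate's history, but is worth stating explicitly.
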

\begin{proof}
We start by  forcing ${\rm MA}+\neg{\rm CH}$. Since this can be done
by a c.c.c.\ forcing notion (\cite{jech:settheory}, \hbox{p. 272}),
we can assume without loss of generality that $V\models{\rm
MA}+\neg{\rm CH}$.

Define a finite support product $\q=\Pi_{\xi<2^\omega}\p^\xi$ where
every $\p^\xi$ is an iteration of length $\omega_1$ as described in
Theorem \ref{th:forcing}. Since Martin's Axiom implies that finite
support products of c.c.c.\ posets are c.c.c.\ (see
\cite{jech:settheory}, \hbox{p. 277}), the product poset $\q$ is
c.c.c.. Let $G\subseteq \q$ be $V$-generic, then each
$G^\xi=G\upharpoonright \p^\xi$ together with $\p^\xi$ can be used
to build an arithmetically closed family $\x^\xi$ as described in
Theorem \ref{th:forcing}.  Each such $\x^\xi$ will be the union of
an increasing chain of countable arithmetically closed families
$\x^\xi_\gamma$ for $\gamma<\omega_1$. First, I claim that all
$\x^\xi$ are distinct. Fixing $\alpha<\beta$, I will show that
$\x^\alpha\neq\x^\beta$. Consider $V[G\upharpoonright
\beta+1]=V[G\upharpoonright \beta][G^\beta]$ a generic extension by
$(\q\upharpoonright\beta)\times \p^\beta$. Observe that $\x^\alpha$
already exists in $V[G\upharpoonright\beta]$.  Recall that to build
$\x^\beta$, we start with an arithmetically closed countable family
$\x_0^\beta$ and let the first poset in the iteration $\p^\beta$ be
$\x_0^\beta/\fin$. Let $g$ be the generic filter for
$\x_0^\beta/\fin$ definable from $G^\beta$. The next step in
constructing $\x^\beta$ is to pick $A\subseteq\n$ such that
$A\subseteq_\fin B$ for all $B\in g$ and define $\x_1^\beta$ to be
the arithmetic closure of $\x_0^\beta$ and $A$. It should be clear
that $g$ is definable from $A$ and $\x_0^\beta$. Since $g$ is
$V[G\upharpoonright\beta]$-generic, it follows that $g\notin
V[G\upharpoonright\beta]$.  Thus, $A\notin
V[G\upharpoonright\beta]$, and hence $\x^\beta\neq\x^\alpha$. It
remains to show that each $\x^\alpha$ is proper in $V[G]$.  Fix
$\alpha<2^\omega$ and let
$V[G]=V[G\upharpoonright\alpha][G^\alpha][G_\text{tail}]$ where
$G_\text{tail}$ is the generic for $\q$ above $\alpha$.  By the
commutativity of products,
$V[G\upharpoonright\alpha][G^\alpha][G_\text{tail}]=
V[G\upharpoonright\alpha][G_\text{tail}][G^\alpha]$ and $G^\alpha$
is $V[G\upharpoonright\alpha][G_\text{tail}]$-generic.  Fix a
countable $M\prec H_\lambda^{V[G]}$ containing the sequence $\la
\x_\xi^\alpha\mid \xi<\omega_1\ra$ as an element. By Lemma
\ref{le:intersect}, $M\cap \x^\alpha$ is some $\x_\gamma^\alpha$.
This is the key step of the proof since it allows us to know exactly
what $M\cap \x^\alpha$ is, even though we know nothing about $M$.
Let $G_\xi^\alpha=G^\alpha\upharpoonright \p_\xi^\alpha$ for
$\xi<\omega_1$. Let $\mathcal D=\{\mathscr D\cap M\mid \mathscr D\in
M\text{ and }\mathscr D \text{ dense in }\x^\alpha/\fin\}$.  There
must be some $\beta<\omega_1$ such that $\mathcal D\in
V[G\upharpoonright\alpha][G_\text{tail}][G_\beta^\alpha]$.  By
construction, there must be some stage $\delta>\beta$ at which we
forced with $\x_\gamma^\alpha/\fin$ and added a set $A$ such that
$A\subseteq_\fin B$ for all $B\in H$ where
$G_{\delta+1}^\alpha=G_\delta^\alpha
*H$.  Now observe that $H$ is $V[G\upharpoonright\alpha][G_\text{tail}]
[G^\alpha_\delta]$-generic for $\x_\gamma^\alpha/\fin$. Therefore
$H$ meets all the sets in $\mathcal D$.  So we can conclude that $A$
is $M$-generic.

A standard nice name counting argument will again show that
$(2^\omega)^V=(2^\omega)^{V[G]}$.  Thus, $V[G]$ satisfies $\neg{\rm
CH}$ and contains continuum many proper families of reals of size
$\omega_1$.
\end{proof}

Similar techniques allow us to force the existence of a piecewise
proper family of reals of size $\omega_2$.

\begin{lemma}\label{le:piecewise}
Let $\x_0\subseteq\x_1\subseteq\cdots\subseteq
\x_\xi\subseteq\cdots$ for $\xi<\omega_1$ be a continuous chain of
countable families of reals and let $\x=\cup_{\xi<\omega_1} \x_\xi$.
Assume that for every $\xi<\omega_1$, if $B\in \x_\xi$ and $\mathcal
D$ is a countable collection of dense subsets of $\x_\xi$, there is
$A\in \x/\fin$ that is $\langle \x_\xi,\mathcal D\rangle$-generic
below $B$. Then $\x$ is proper and $\x$ remains proper after forcing
with any absolutely c.c.c.\ poset.
\end{lemma}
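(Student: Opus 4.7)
The first conclusion, that $\x$ is proper in $V$, follows immediately from Lemma \ref{le:gen_elt}, since the hypothesis here is exactly its hypothesis. For the preservation claim, the plan is to adapt the argument of Theorem \ref{th:oldreals}: properness of $\x$ in $V$ (granted by the first part) will play the role that countable closure of $\power^V(\n)/\fin$ plays there, and the absolute c.c.c.\ of the forcing will be used exactly in the step where Theorem \ref{th:oldreals} invokes Lemma \ref{le:count}.

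Let $\p$ be absolutely c.c.c.\ in $V$ and let $g\subseteq\p$ be $V$-generic. By Theorem \ref{th:equivproper}, to show $\x/\fin$ is proper in $V[g]$ it suffices to produce, for each $B\in M[g]\cap\x/\fin$, an $M[g]$-generic condition below $B$, ranging over the countable $M[g]\prec H_\lambda^{V[g]}$ with $M\prec H_\lambda^V$ countable in $V$ containing $\la\x_\xi\mid\xi<\omega_1\ra$ and $\p$; these $M[g]$ form a club in $V[g]$ exactly as in Theorem \ref{th:oldreals}. By the c.c.c.\ of $\p$, the standard counting of values of ordinal names forces $M\cap\text{Ord}=M[g]\cap\text{Ord}$; in particular, setting $\alpha=M\cap\omega_1=M[g]\cap\omega_1$, Lemma \ref{le:intersect} applied in $V[g]$ identifies $M[g]\cap\x$ with $\x_\alpha$. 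Therefore $B\in\x_\alpha\subseteq M$, and since $\x$ is proper in $V$ I pick $A\in\x$ with $A\subseteq_\fin B$ that is $M$-generic in the $V$-sense.

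To upgrade $A$ to $M[g]$-generic in $V[g]$, fix an arbitrary $V[g]$-generic filter $G'\subseteq\x/\fin$ containing $A$; by Theorem \ref{th:equproper} I need $M[g][G']\cap\text{Ord}=M[g]\cap\text{Ord}$. Every $V[g]$-generic filter on $\x/\fin$ is also $V$-generic, so the $M$-genericity of $A$ in $V$ yields $M[G']\cap\text{Ord}=M\cap\text{Ord}=M[g]\cap\text{Ord}$. Since $M[g][G']=M[G'][g]$, what remains is to verify $M[G'][g]\cap\text{Ord}=M[G']\cap\text{Ord}$.

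This last step is precisely where absolute c.c.c.\ is needed, and is the principal obstacle: because $\p$ remains c.c.c.\ in $V[G']$, every maximal antichain of $\p$ lying in $M[G']$ is countable in $V[G']$ and therefore, by elementarity of $M[G']\prec H_\lambda^{V[G']}$ together with $\omega\subseteq M[G']$, is contained in $M[G']$; the product lemma supplies that $g$ is $V[G']$-generic for $\p$, so $g$ meets each such antichain, $g$ is $M[G']$-generic, and Theorem \ref{th:equproper} delivers the required ordinal equality. Without the \emph{absolute} strengthening of c.c.c., forcing with the non-c.c.c.\ poset $\x/\fin$ could blow up antichains of $\p$, letting $g$ miss an $M[G']$-antichain and breaking the reduction.
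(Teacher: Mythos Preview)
Your proof is correct and follows essentially the same route as the paper: reduce to a club of substructures of the form $M[g]$ with $M\prec H_\lambda^V$, identify $M[g]\cap\x=\x_\alpha$ via Lemma~\ref{le:intersect}, produce an $M$-generic $A\in\x$ below $B$, and then upgrade to $M[g]$-generic by the product rearrangement $M[g][G']=M[G'][g]$, using absolute c.c.c.\ of $\p$ in place of Lemma~\ref{le:count} at the final step. The one difference is cosmetic: to obtain the $M$-generic $A$, the paper re-runs the $\mathcal D\subseteq\mathcal D'\to\mathcal E$ construction from Theorem~\ref{th:oldreals} and invokes the hypothesis to get an $\langle\x_\alpha,\mathcal E\rangle$-generic element, whereas you simply appeal to the first conclusion (properness of $\x$ in $V$, already given by Lemma~\ref{le:gen_elt}); your packaging is slightly more modular but the underlying content is identical.
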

\begin{proof}
The proof is a straightforward modification of the proof of Theorem
\ref{th:oldreals}. Let $\p$ be an absolutely c.c.c.\ poset and
$g\subseteq \p$ be $V$-generic. We need to show that $\x$ is proper
in $V[g]$. Fix a countable $M[g]\prec H_\lambda [g]$ in $V[g]$ such
that $\la \x_\xi:\xi<\omega_1\ra, \p\in M[g]$ and $M\subseteq V$.
Let $\x_\alpha=M\cap \x$ and let $\mathcal D=\{\mathscr D\cap
\x_\xi\mid\mathscr D\in M\text{ and }\mathscr D\text{ dense in
}\x\}$. Observe that $\mathcal D\subseteq V$ and $|\mathcal
D|=\omega$. Define $\mathcal E$ as in proof of Theorem
\ref{th:oldreals}. Now choose $A\in \x$ that is $\la
\x_\alpha,\mathcal E\ra$-generic in $V$. It follows that $A$ is
$M$-generic. Next proceed exactly as in the proof of Theorem
\ref{th:oldreals}, using the fact that $\p$ is absolutely c.c.c.\ in
the final stage of the argument.
\end{proof}
\begin{theorem}\label{th:piecewise}
There is a generic extension of $V$ by a c.c.c.\ poset which
contains a piecewise proper family of reals of size $\omega_2$.
\end{theorem}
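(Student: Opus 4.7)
The plan is to stretch the construction of Theorem~\ref{th:forcing} to a c.c.c.\ finite support iteration $\p$ of length $\omega_2$, organized as $\omega_2$ consecutive blocks each of length $\omega_1$, and to produce a continuous chain of proper families $\y^\alpha$ for $\alpha<\omega_2$, each of size $\omega_1$, whose union is the desired piecewise proper family of size $\omega_2$. I would first force ${\rm MA}+\neg{\rm CH}$ with $2^\omega=\omega_2$, as in Theorem~\ref{th:continuum}, so that the tail iterations encountered below are absolutely c.c.c.\ in the sense required by Lemma~\ref{le:piecewise}.

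Inside the $\alpha$-th block I reproduce Theorem~\ref{th:forcing}. At the start of the block I fix an enumeration $\{A^\alpha_\gamma\mid\gamma<\omega_1\}$ of $\y^{<\alpha}:=\cup_{\beta<\alpha}\y^\beta$, which has size $\leq\omega_1$ by induction, and I build a continuous chain of countable arithmetically closed families $\y^\alpha_\xi$ for $\xi<\omega_1$ so that $\y^\alpha_\xi$ contains $\{A^\alpha_\gamma\mid\gamma<\xi\}$ together with every real added inside the block at stages before $\xi$. A bookkeeping function for the block guarantees that for every $\xi<\omega_1$ and every $B\in\y^\alpha_\xi$ the poset $\y^\alpha_\xi/\fin$ is forced with below $B$ cofinally often in the block; each such individual forcing is c.c.c.\ because $\y^\alpha_\xi$ is countable. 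Setting $\y^\alpha:=\cup_{\xi<\omega_1}\y^\alpha_\xi$, the argument of Theorem~\ref{th:forcing} shows that in the intermediate model $V[G\upharpoonright\omega_1\cdot(\alpha+1)]$ the chain $\la\y^\alpha_\xi\mid\xi<\omega_1\ra$ satisfies the hypothesis of Lemma~\ref{le:piecewise}, so $\y^\alpha$ is proper there. Since the tail of the iteration is absolutely c.c.c., Lemma~\ref{le:piecewise} yields that $\y^\alpha$ remains proper in $V[G]$.

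By construction $\y^\alpha\subseteq\y^\beta$ for $\alpha<\beta$ and $|\y^\alpha|\leq\omega_1$, so $\x:=\cup_{\alpha<\omega_2}\y^\alpha$ is piecewise proper. A standard nice name argument gives $(2^\omega)^V=(2^\omega)^{V[G]}=\omega_2$; to see that $|\x|=\omega_2$ I argue as in Theorem~\ref{th:continuum} that the first forcing inside block $\alpha$ produces a real generic over $V[G\upharpoonright\omega_1\cdot\alpha]$, hence strictly enlarging $\y^{<\alpha}$, so the chain $\y^\alpha$ is strictly increasing cofinally in $\omega_2$. The main obstacle is designing the block-level bookkeeping so that (i) every stage forces with only a countable poset, keeping the full iteration c.c.c., (ii) the previous family $\y^{<\alpha}$ is absorbed into $\y^\alpha$ while each $\y^\alpha_\xi$ stays countable, and (iii) the continuous chain $\y^\alpha_\xi$ retains the generic-element property needed by Lemma~\ref{le:piecewise}; verifying that the tail iteration is absolutely c.c.c.\ in the intermediate model, which is where the initial ${\rm MA}$ assumption is genuinely used, is the most delicate step.
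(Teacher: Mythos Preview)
Your approach is essentially the paper's: iterate the Theorem~\ref{th:forcing} construction $\omega_2$ times, each block extending the union of the previous families, and use Lemma~\ref{le:piecewise} to show each $\y^\alpha$ stays proper after the tail forcing. The paper's write-up is shorter only because it phrases block~$\alpha$ as ``force with the poset of Theorem~\ref{th:forcing} starting from $\cup_{\beta<\alpha}\x_\beta$,'' which is exactly what your block does.

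The one point where you diverge is the preliminary forcing of ${\rm MA}$ and your claim that this is what makes the tail absolutely c.c.c. This is both unnecessary and, as stated, does not actually work: ${\rm MA}$ need not survive into the intermediate models $V[G\upharpoonright\omega_1\cdot(\alpha+1)]$, and in any case ${\rm MA}$ controls products, not iterations. The paper's reason is simpler and robust: every iterand is a \emph{countable} poset, countability is upward absolute, and a finite support iteration of c.c.c.\ posets is c.c.c.\ in {\rm ZFC}; hence the tail remains c.c.c.\ in any extension, in particular after forcing with $\y^\alpha/\fin$. Once you replace your ${\rm MA}$ justification with this observation, your argument matches the paper's. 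Your extra verification that the chain is strictly increasing (so $|\x|=\omega_2$) is a nice detail the paper leaves implicit.
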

\begin{proof}
We will define a c.c.c.\ forcing iteration $\q_{\omega_2}$ of length
$\omega_2$ to accomplish this. Let $\q_0$ be the forcing to add a
proper family $\x_0$ of size $\omega_1$ (Theorem \ref{th:forcing}).
At the $\alpha^{\text{th}}$-stage, force with the poset to add a
proper family $\x_{\alpha}\supseteq \cup_{\beta<\alpha}\x_\beta$.
Observe here, that the poset from Theorem \ref{th:forcing} can be
very easily modified to the poset which adds a proper family
extending any family of reals from the ground model. Let $G\subseteq
\q_{\omega_2}$ be $V$-generic. I claim each $\x_\alpha$ remains
proper in $V[G]$. Fix $\x_\alpha$ and factor the forcing
$\q_{\omega_2}=\q_\alpha*\q_\tail$. The family $\x_\alpha$ is proper
in $V[G_\alpha]$ and $\q_\tail$ is absolutely c.c.c.\ in
$V[G_\alpha]$. The poset $\q_\tail$ is absolutely c.c.c\ since the
forcing to add a proper family is a finite support iteration of
countable posets. Thus, by Lemma \ref{le:piecewise}, $\x_\alpha$
remains proper in $V[G_\alpha][G_\tail]$. Thus,
$\x=\cup_{\alpha<\omega_2} \x_\alpha$ is clearly piecewise proper.
\end{proof}
By exactly following the proof of Theorem \ref{th:continuum}, we can
extend Theorem \ref{th:piecewise} to obtain:
\begin{theorem}
There is a generic extension of $V$ by a c.c.c.\ poset which
contains continuum many piecewise proper families of reals of size
$\omega_2$.
\end{theorem}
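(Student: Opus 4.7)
The plan is to follow the template of Theorem~\ref{th:continuum} with the iteration $\q_{\omega_2}$ from Theorem~\ref{th:piecewise} playing the role of the iteration $\p$ from Theorem~\ref{th:forcing}. First I would pass to a c.c.c.\ extension forcing ${\rm MA}+\neg{\rm CH}$, so without loss of generality assume $V\models{\rm MA}+\neg{\rm CH}$. Then define the finite-support product $\mathcal{R}=\prod_{\xi<2^\omega}\q^\xi_{\omega_2}$ of $2^\omega$ independent copies of the iteration from Theorem~\ref{th:piecewise}. Under ${\rm MA}$, finite-support products of c.c.c.\ posets remain c.c.c., so $\mathcal{R}$ is c.c.c. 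Let $G\subseteq\mathcal{R}$ be $V$-generic and set $G^\xi = G\upharpoonright\q^\xi_{\omega_2}$; each $G^\xi$ builds a piecewise proper family $\x^\xi=\bigcup_{\alpha<\omega_2}\x^\xi_\alpha$ exactly as in Theorem~\ref{th:piecewise}, where each $\x^\xi_\alpha$ is proper of size $\omega_1$.

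Distinctness of the $\x^\xi$ is handled by the same genericity argument used in Theorem~\ref{th:continuum}. For $\alpha<\beta<2^\omega$, the family $\x^\alpha$ already lies in $V[G\upharpoonright\beta]$, where $G\upharpoonright\beta$ is the restriction of $G$ to the first $\beta$ coordinates. On the other hand, the very first real added by the construction of $\x^\beta$ (produced from the generic for $\x^\beta_0/\fin$ sitting at the bottom of $\q^\beta_{\omega_2}$) is generic over $V[G\upharpoonright\beta]$, and in particular does not lie in $\x^\alpha$. Hence $\x^\beta\neq\x^\alpha$.

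The substantive step is verifying that each $\x^\xi$ remains piecewise proper in $V[G]$, and here I would invoke Lemma~\ref{le:piecewise}. Fix $\xi<2^\omega$ and $\beta<\omega_2$; I need to show that the proper family $\x^\xi_\beta$ is still proper in $V[G]$. Pick a stage $\gamma<\omega_2$ by which $\x^\xi_\beta$ has been constructed inside $\q^\xi_{\omega_2}$, and factor $\mathcal{R}$ as $\q^\xi_{\gamma+1}$ followed by a tail forcing $\q_\tail$ consisting of the remainder of $\q^\xi_{\omega_2}$ producted (with finite support) with all $\q^\eta_{\omega_2}$ for $\eta\neq\xi$. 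Since each of these components is itself a finite-support iteration of countable posets, $\q_\tail$ is a finite-support iteration of countable posets in $V[G^\xi_{\gamma+1}]$, hence absolutely c.c.c.\ there. Lemma~\ref{le:piecewise} then applies and yields that $\x^\xi_\beta$ remains proper in $V[G]$. Since $\beta$ was arbitrary, $\x^\xi$ is piecewise proper in $V[G]$.

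Finally, a standard nice-name counting argument gives $(2^\omega)^{V[G]}=(2^\omega)^V$, so $V[G]$ satisfies $\neg{\rm CH}$ and contains $2^\omega$ many distinct piecewise proper families of reals of size $\omega_2$. The main obstacle is the factorization in the third paragraph: one must confirm both that $\x^\xi_\beta$ is genuinely captured inside the intermediate model $V[G^\xi_{\gamma+1}]$ and that the residual forcing $\q_\tail$ really is absolutely c.c.c.\ there. The former follows from the finite-support nature of the iteration, while the latter relies on the fact that finite-support products and iterations of countable posets remain absolutely c.c.c.\ after any c.c.c.\ forcing.
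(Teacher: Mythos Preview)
Your proposal is correct and follows essentially the approach the paper intends: the paper's proof consists of the single sentence ``By exactly following the proof of Theorem~\ref{th:continuum}, we can extend Theorem~\ref{th:piecewise} to obtain [the result],'' and you have carried out precisely that combination --- a finite-support product of $2^\omega$ copies of the $\q_{\omega_2}$ iteration under ${\rm MA}$, distinctness via the genericity argument of Theorem~\ref{th:continuum}, and preservation of properness of each $\x^\xi_\beta$ via the factorization-plus-Lemma~\ref{le:piecewise} argument of Theorem~\ref{th:piecewise}. One small remark: your tail $\q_\tail$ is literally a finite-support \emph{product} of finite-support iterations of countable posets rather than a single iteration, but since such a product can itself be reorganized as a finite-support iteration of countable (ground-model) posets, your claim that it is absolutely c.c.c.\ goes through.
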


By Enayat's \cite{enayat:endextensions} example of a non-proper
arithmetically closed family of size $\omega_1$, we know that there
are piecewise proper families that are not proper. This follows by
recalling that arithmetically closed families of size $\omega_1$ are
trivially piecewise proper. It is not clear whether every proper
family has to be piecewise proper. In particular, it is not known
whether $\power(\n)$ is piecewise proper. It follows that
$\power(\n)$ \emph{can be} piecewise proper from the proof of
Theorem \ref{th:piecewise} since we can modify the construction to
end up with $\x=\power(\n)$.

Finally, I will discuss a possible construction for proper families
under {\rm PFA}. The idea is, in some sense, to mimic the forcing
iteration like that of Theorem \ref{th:forcing} in the ground model.
Unfortunately, the main problem with the construction is that it is
not clear whether we are getting the whole $\power(\n)$. This
problem never arose in the forcing construction since we were
building families of size $\omega_1$ and knew that the continuum was
larger than $\omega_1$.  I will describe the construction and a
possible way of ensuring that the resulting family is not
$\power(\n)$.

Fix an enumeration $\{\langle A_\xi,B_\xi\rangle\mid \xi<\omega_2\}$
of $\power(\omega)\times \power(\omega)$. Also fix a bookkeeping
function $f$ from $\omega_2$ onto $\omega_2$ such that each element
appears cofinally in the range. I will build a family $\x$ of size
$\omega_2$ as the union of an increasing chain of arithmetically
closed families $\x_\xi$ for $\xi<\omega_2$. Start with any
arithmetically closed family $\x_0$ of size $\omega_1$. Suppose we
have constructed $\x_\beta$ for $\beta\leq\alpha$ and we need to
construct $\x_{\alpha+1}$. Consult $f(\alpha)=\gamma$ and consider
the pair $\langle A_\gamma,B_\gamma\rangle$ in the enumeration of
$\power(\omega)\times \power(\omega)$. First, suppose that
$A_\gamma$ codes a countable family $\y\subseteq \x_\alpha$ and
$B_\gamma$ codes a countable collection $\mathcal D$ of dense
subsets of $\y$. Let $G$ be some filter on $\y$ meeting all sets in
$\mathcal D$ and let $A\subseteq_\fin C$ for all $C\in G$. Define
$\x_{\alpha+1}$ to be the arithmetic closure of $\x_\alpha$ and $A$.
If the pair $\langle A_\gamma,B_\gamma\rangle $ does not code such
information, let $\x_{\alpha+1}=\x_\alpha$.  At limit stages take
unions.

I claim that $\x$ is proper.  Fix some countable $M\prec H_\lambda$
containing $\x$. Let $\y=M\cap \x$ and let $\mathcal D=\{\mathscr
D\cap M\mid \mathscr D\in M\text{ and }\mathscr D \text{ dense in
}\x/\fin \}$. There must be some $\gamma$ such that $\langle
A_\gamma, B_\gamma\rangle$ codes $\y$ and $\mathcal D$. Let $\delta$
such that $M\cap \x$ is contained in $\x_\delta$, then there must be
some $\alpha>\delta$ such that $f(\alpha)=\gamma$. Thus, at stage
$\alpha$ in the construction we considered the pair $\langle
A_\gamma,B_\gamma\rangle$. Since $\alpha>\delta$, we have $M\cap
\x=M\cap \x_\alpha$.  It follows that at stage $\alpha$ we added an
$M$-generic set $A$ to $\x$.

A way to prove that $\x\neq \power(\n)$ would be to show that some
fixed set $C$ is not in $\x$.  Suppose the following question had a
positive answer:
\begin{question}\label{con:proper}
Let $\x$ be an arithmetically closed family such that $C\notin \x$
and $\y\subseteq \x$ be a countable family. Is there a
$\y/\fin$-name $\dot{A}$ such that $1_{\y/\fin}\Vdash ``
\dot{A}\subseteq_\fin B\text{ for all }B\in \dot{G}\text{ and
}\check{C}$ is not in the arithmetic closure of $\dot{A}\text{ and
}\check{\x}$"?
\end{question}

Assuming that the answer to Question \ref{con:proper} is positive,
let us construct a proper family $\x$ in such a way that $C$ is not
in $\x$.  We will carry out the above construction being careful in
our choice of the filters $G$ and elements $A$. Start with $\x_0$
that does not contain $C$ and assume that $C\notin \x_\alpha$.
Suppose the pair $\langle A_\gamma,B_\gamma\rangle$ considered at
stage $\alpha$ codes meaningful information.  That is, $A_\gamma$
codes a countable family $\y\subseteq \x_\alpha$ and $B_\gamma$
codes a countable collection $\mathcal D$ of dense subsets of $\y$.
Choose some transitive $N\prec H_{\omega_2}$ of size $\omega_1$ such
that $\x_\alpha$, $\y$, and $\mathcal D$ are elements of $N$. Since
we assumed a positive answer to Question \ref{con:proper},
$H_{\omega_2}$ satisfies that there exists a $\y/\fin$-name
$\dot{A}$ such that $1_{\y/\fin}\Vdash ``\dot{A}\subseteq_\fin
B\text{ for all }B\in \dot{G}\text{ and }\check{C}$ is not in the
arithmetic closure of $\dot{A}\text{ and }\check{\x_\alpha}$". But
then $N$ satisfies the same statement by elementarity. Hence there
is $\dot{A}\in N$ such that $N$ satisfies $1_{\y/\fin}\Vdash
``\dot{A}\subseteq_\fin B\text{ for all }B\in \dot{G}\text{ and
}\check{C}$ is not in the arithmetic closure of $\dot{A}\text{ and}$
$\check{\x_\alpha}$". Now use {\rm PFA} to find an $N$-generic
filter $G$ for $\y/\fin$. Since $G$ is fully generic for the model
$N$, the model $N[G]$ will satisfy that $C$ is not in the arithmetic
closure of $\x_\alpha$ and $A=\dot{A}_G$. Thus, it is really true
that $C$ is not in the arithmetic closure of $\x_\alpha$ and $A$.
Since $G$ also met all the dense sets in $\mathcal D$ and
$A\subseteq_\fin B$ for all $B\in G$, we can let $\x_{\alpha+1}$ be
the arithmetic closure of $\x_\alpha$ and $A$.  Thus, $C\notin
\x_{\alpha+1}$. We can conclude that $C\notin\x$.
\section{Questions}
\begin{question}
Can {\rm ZFC} or {\rm ZFC} + {\rm PFA} prove the existence of an
uncountable proper family of reals other than $\power(\n)$?
\end{question}
\begin{question}
Can {\rm ZFC} or {\rm ZFC} + {\rm PFA} prove the existence of a
piecewise proper family of size $\omega_2$?
\end{question}
\begin{question}
Is it consistent with {\rm ZFC} that there are proper families of
reals of size $\omega_2$ other than $\power(\n)$?
\end{question}
\begin{question}
What is the answer to Question \ref{con:proper}?
\end{question}
\begin{question}
Can $\power(\n)$ be non-piecewise proper?
\end{question}
\bibliographystyle{alpha}
\bibliography{database}
\end{document}